\definecolor{midgrey}{RGB}{150,173,180}
\newcommand{\komp}      {{^\prime}}
\newcommand{\kompM}[1]{^{{\prime^{\mkern-4mu^{_{#1}}}}}}
\newcommand{\nega}      [1] {{#1}\komp}
\newcommand{\negaM}      [2] {{#2}\kompM{#1}}
\newcommand{\te}{{\mathbin{*\mkern-9mu \circ}}}
\newcommand{\ite}[1]{\mathbin{\rightarrow_{#1}}}
\newcommand{\g}                 [2] {{#1} \mathbin{\te} {#2}}
\newcommand{\res}               [3] {{#2}\mathbin{\ite{#1}}{#3}}
\newcommand{\lex}                                        {\overset{\leftarrow}{\times}}
\newcommand{\Twoheadleftarrow}               {\leftarrow \mkern-12.25mu \leftharpoonup}
\newcommand{\threeheadleftarrow}               {\leftarrow \mkern-12.5mu \leftarrow}
\newcommand{\plexII}               {\overset{\Twoheadleftarrow}{\times}}
\newcommand{\plexI}               {\overset{\threeheadleftarrow}{\times}}
\newcommand{\PLPI}            [3] {{#1}_{{#2}}\plexI{{#3}}}
\newcommand{\PLPII}            [2] {{#1}\plexII{{#2}}}
\newcommand{\PLPIII}            [4] {{#1}_{{#2}_{#3}}\plexI{{#4}}}
\newcommand{\PLPIV}            [3] {{#1}_{#2}\plexII{{#3}}}
\begin{document}
%\large

%\title{Group Representation and Hahn-type Embedding for a Class of Involutive Residuated Chains  \thanks{The present scientific contribution was supported by the GINOP 2.3.2-15-2016-00022 grant.}

\title{Involutive uninorm logic with fixed point enjoys finite strong standard completeness\thanks{The present scientific contribution was supported by the GINOP 2.3.2-15-2016-00022 grant
and the Higher Education Institutional Excellence Programme 20765-3/2018/FEKUTSTRAT of the Ministry of Human Capacities in Hungary.}
%\title{Group representation and Hahn-type embedding for commutative odd residuated chains having finitely many idempotent elements  \thanks{The present scientific contribution was supported by the GINOP 2.3.2-15-2016-00022 grant.}
%\title{Group Representation and Hahn-type Embedding for Odd FL$_e$-chains with Finitely Many Idempotent Elements  \thanks{The present scientific contribution was supported by the GINOP 2.3.2-15-2016-00022 grant.}
}
%\subtitle{Do you have a subtitle?\\ If so, write it here}

%\titlerunning{Group Representation and Hahn-type Embedding for Odd FL$_e$-chains with Finitely Many Idempotent Elements}        % if too long for running head

\titlerunning{$\mathbf{IUL}^{fp}$ enjoys finite strong standard completeness}        % if too long for running head

\author{S\'andor Jenei}

%\authorrunning{Short form of author list} % if too long for running head

\institute{S\'andor Jenei \at
              Institute of Mathematics and Informatics, University of P\'ecs, Ifj\'us\'ag u. 6, H-7624 P\'ecs, Hungary \\
              Tel.: +36-72-503600,24684\\
              Fax: +36-72-503697\\
              \email{jenei@ttk.pte.hu}
              }

\date{}
% The correct dates will be entered by the editor

\maketitle

\begin{abstract}
%By relying on a representation theorem for odd FL$_e$-chains \cite{Jenei_Hahn}, a
An algebraic proof is presented for the finite strong standard completeness of  involutive uninorm logic with fixed point.
The result may provide a first step towards settling the open standard completeness problem for involutive uninorm logic posed in \cite{MetMon 2007}.

\keywords{%Mathematical fuzzy logics \and 
Involutive Residuated Lattices \and Substructural Fuzzy Logics \and Standard Completeness \and Embedding}
% \PACS{PACS code1 \and PACS code2 \and more}
% \subclass{MSC code1 \and MSC code2 \and more}
\end{abstract}

\section{Introduction and preliminaries}

Mathematical fuzzy logics have been introduced in \cite{hajekbook}, and the topic is a rapidly growing field ever since (\cite{MetETAL,Handbooks}).
Substructural fuzzy logics were introduced in \cite{MetMon 2007} as substructural logics that are standard complete, that is, complete with respect to algebras whose lattice reduct is the real unit interval $[0,1]$, and standard completeness for several substructural logics, all are stronger than the there-introduced Uninorm Logic ($\mathbf{UL}$), has been proven, too, with the notable exception of the Involutive Uninorm Logic ($\mathbf{IUL}$).
Its standard completeness has remained an open problem, which has stood against the attempts using the widely used embedding method of \cite{JMstcompl} or the density elimination technique of \cite{MetMon 2007}.
An algebraic semantics for $\mathbf{IUL}$ is the variety of bounded involutive FL$_e$-chains. 
The structure of involutive FL$_e$-chains is quite rich; even the integral algebras of the class, the class of IMTL-chains, is a class containing, e.g., the connected rotations \cite{rotinvsemigroups} of all MTL-chains.
This richness renders their algebraic description a hard task.
Therefore, as a first step toward this aim, we focus on odd FL$_e$-chains, which is a subclass of involutive FL$_e$-chains.
Bounded odd FL$_e$-chains constitute the algebraic semantics for the Involutive Uninorm Logic with Fixed Point ($\mathbf{IUL}^{fp}$) \cite{GMPhD}.
Prominent examples of odd FL$_e$-algebras are lattice-ordered abelian groups and odd Sugihara algebras.
The former constitutes an algebraic semantics for Abelian Logic \cite{MeyerAbelian,cesari,AbelianNow} while the latter constitutes an algebraic semantics for $\mathbf{IUML}^*$, which is a logic at the intersection of relevance logic and many-valued logic \cite{GalRaf}.
\\
\indent 
In order to make a possible first step toward the settling the standard completeness problem for $\mathbf{IUL}$, in this paper we prove the finite strong standard completeness of $\mathbf{IUL}^{fp}$, a somewhat simpler logic than $\mathbf{IUL}$.
The key ingredient in our proof is an embedding, which is based on a representation theorem for the class of odd FL$_e$-chains which possess only finitely many positive idempotent elements, by means of linearly ordered abelian groups and a modified version of the lexicographic product construction \cite{Jenei_Hahn}.

\medskip
An {\em FL$_e$-algebra} %\footnote{Other terminologies for FL$_e$-algebras are: pointed commutative residuated lattices or pointed commutative residuated lattice-ordered monoids.} 
is a structure $( X, \wedge,\vee, \te, \ite{\te}, t, f )$ such that 
$( X, \wedge,\vee )$ is a lattice, $( X,\leq, \te,t)$ is a commutative, %\footnote{The subscript $e$ refers to the exchange rule of a substructural logic \cite{gjko}, which is equivalent to the commutativity of the algebras of the related algebraic semantic of that logic.}, 
residuated\footnote{That is, there exists a binary operation $\ite{\te}$,
called the residual operation of $\te$, such that $\g{x}{y}\leq z$ if and only if $\res{\te}{x}{z}\geq y$; this equivalence is called residuation condition or adjointness condition, ($\te,\ite{\te}$) is called an adjoint pair. Equivalently, for any $x,z$, the set $\{v\ | \ \g{x}{v}\leq z\}$ has its greatest element, and $\res{\te}{x}{z}$, the residuum of $x$ and $z$, is defined as this element: $\res{\te}{x}{z}:=\max\{v\ | \ \g{x}{v}\leq z\}$.}
monoid, %\footnote{We use the word monoid to mean semigroup with unit element.}, 
and $f$ is an arbitrary constant. One defines $\nega{x}=\res{\te}{x}{f}$ and calls an FL$_e$-algebra {\em involutive} if $\nega{(\nega{x})}=x$ holds.
An FL$_e$-algebra is called {\em odd} if it is involutive and $t=f$. 
%Denote the positive cone of $X$ by $X^+$, and call the elements of $X^+$ different from $t$ strictly positive.
Algebras will be denoted by bold capital letters, their underlying likewise by the same regular letter.
Commutative residuated lattices are exactly the $f$-free reducts of FL$_e$-algebras.

For an FL$_e$-algebra $\mathbf X=( X, \wedge, \vee, \te, \ite{\te}, t, f )$, denote $\tau(x)=\res{\te}{x}{x}$ for $x\in X$.
Let $X_{gr}=\{ x\in X \ | \g{x}{\nega{x}}=t \}$. 
If $\mathbf X$ is odd then there is a subalgebra $\mathbf X_\mathbf{gr}$ of $\mathbf X$ over $X_{gr}$,
%and $\mathbf X_\mathbf{gr}$ is the largest subalgebra of $\mathbf X$ such that its monoidal reduct is a group. C
call it the group part of $\mathbf X$.

\begin{lemma}\label{tau_lemma}{\rm \cite{Jenei_Hahn}}
Let  $\mathbf X=( X,  \wedge, \vee, \te, \ite{\te}, t, f )$ be an involutive FL$_e$-algebra.
The following statements hold true.
\begin{enumerate}
\item\label{id_RangeT}
$\{ \tau(x): x\in X \}$ is equal to the set of positive idempotent elements of $X$. 
\item\label{tauINHERITED}
If $\mathbf X$ is an odd FL$_e$-chain then the $\tau$ value of any expression, which contains only the operations $\te$, $\ite{\te}$ and \, $\komp$ equals the maximum of the $\tau$-values of its variables and constants.\qed
\end{enumerate}
\end{lemma}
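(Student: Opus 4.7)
For part (\ref{id_RangeT}), I would argue two inclusions. First, that every $\tau(x)$ is a positive idempotent: from $t\te x\leq x$ and residuation we obtain $t\leq \tau(x)$, so $\tau(x)$ is positive; and from $x\te\tau(x)\leq x$ we get $x\te\tau(x)\te\tau(x)\leq x$, hence $\tau(x)\te\tau(x)\leq\tau(x)$, while the reverse inequality follows from $t\leq\tau(x)$ and monotonicity of $\te$. Second, that every positive idempotent $e$ is of the form $\tau(y)$: choose $y=e$; then $e\te e=e$ yields $e\leq e\tedualeq e\to e=\tau(e)$ (I will in fact use only $e\te e\leq e$), while $e\te\tau(e)\leq e$ together with $t\leq e$ gives $\tau(e)=t\te\tau(e)\leq e\te\tau(e)\leq e$. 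This is a short calculation in any involutive residuated monoid and does not use the chain hypothesis.

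For part (\ref{tauINHERITED}), I would proceed by structural induction on the term. The base cases (variables and the constants $t,f$) are immediate, since $\tau(t)=t$ is the least positive idempotent and in an odd algebra $t=f$. The inductive step splits into three subclaims:
(a) $\tau(a')=\tau(a)$, which I would obtain from the involutive contraposition law $x\to y=y'\to x'$ applied to $a'\to a'$, so that $\tau(a')=a'\to a'=(a')'\to(a')'=a\to a=\tau(a)$;
(b) $\tau(a\te b)=\tau(a)\vee\tau(b)$, which is the heart of the matter; and
(c) $\tau(a\to b)=\tau(a)\vee\tau(b)$, which I would reduce to (a) and (b) via the involutive identity $a\to b=(a\te b')'$.

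For subclaim (b), since the algebra is a chain and by part (\ref{id_RangeT}) both $\tau(a)$ and $\tau(b)$ are positive idempotents, I may assume without loss of generality $\tau(a)\leq\tau(b)$, and the goal becomes $\tau(a\te b)=\tau(b)$. The inequality $\tau(a\te b)\geq\tau(b)$ is easy: $(a\te b)\te\tau(b)=a\te(b\te\tau(b))\leq a\te b$, so $\tau(b)\leq(a\te b)\to(a\te b)=\tau(a\te b)$. The nontrivial direction is $\tau(a\te b)\leq\tau(b)$.

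\textbf{Main obstacle.} The hard part is exactly this upper bound in (b), and this is where the chain hypothesis is indispensable. I would establish an auxiliary \emph{stability} property first: for any positive idempotent $e$ and any $x$ with $\tau(x)\leq e$, one has $x\te e=x$ (equivalently, multiplying $x$ by an idempotent at least as large as its own $\tau$-value does not change it). Granting this, $\tau(a)\leq\tau(b)$ implies $a\te\tau(b)=a$ and $b\te\tau(b)=b$, so $(a\te b)\te\tau(b)=a\te b$, which yields $\tau(a\te b)\leq\tau(b)$ by residuation. The stability property itself is the real work: on an odd FL$_e$-\emph{chain} it follows by a case analysis using the involutive negation to flip elements between the ``above $e$'' and ``below $e$'' sides, combined with the definition $\tau(x)=x\to x$ and the linearity of $\leq$; this is precisely the sort of layer-by-layer analysis that justifies the representation theorem in \cite{Jenei_Hahn} and that would fail outside the chain setting.
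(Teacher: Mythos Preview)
The paper does not prove this lemma; it is quoted from \cite{Jenei_Hahn} with a bare \qed, so there is no proof here to compare against. Your argument for part~(\ref{id_RangeT}) is correct and standard, as are the reductions (a) and (c) and the lower bound $\tau(a)\vee\tau(b)\le\tau(a\te b)$ in (b) of part~(\ref{tauINHERITED}).

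The upper bound in (b) does not go through, for two independent reasons. First, your stability property is false. In the odd Sugihara chain on $\mathbb Z$ (where $a\te b$ is the argument of larger absolute value, with $\min$ breaking ties) every positive integer $k$ is a positive idempotent with $\tau(k)=k$; taking $x=1$ and $e=2$ gives $\tau(x)=1\le 2=e$ yet $x\te e=1\te 2=2\neq 1$. The true general fact has the hypothesis reversed: $t\le e\le\tau(x)$ implies $x\te e=x$ (immediate from $x\te e\le x\te\tau(x)\le x$ and $x\te e\ge x$), but that is useless here. Second, and independently, your final deduction is the wrong way round. The equality $(a\te b)\te\tau(b)=a\te b$ follows already from the always-true $b\te\tau(b)=b$ alone, and by residuation it yields $\tau(b)\le(a\te b)\to(a\te b)=\tau(a\te b)$ --- the easy direction once more --- not $\tau(a\te b)\le\tau(b)$. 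So subclaim (b)$_{\le}$ has no argument at all in your sketch; the inductive scaffolding around it is sound, but the core step needs a genuinely different idea specific to odd FL$_e$-chains.
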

Let $(X, \leq)$ be a 
poset.
For $x\in X$ define 
$$
x_\downarrow=
\left\{
\begin{array}{ll}
z & \mbox{if there exists a unique $z\in X$ such that $x$ covers $z$,}\\
x & \mbox{otherwise.}\\
\end{array}
\right.
$$
Define $x_\uparrow$ dually. 
%Note that if $\komp$ is an order-reversing involution of $X$ then it holds true that 
%\begin{equation}\label{FelNeg_NegLe}
%\nega{x}_\uparrow=\nega{(x_\downarrow)} \ \ \   \mbox{ and }  \ \ \ \nega{x}_\downarrow=\nega{(x_\uparrow)} .
%\end{equation}
We say for $Z\subseteq X$ that $Z$ is {\em discretely embedded} into $X$ if 
for $x\in Z$ it holds true that $x\notin\{ x_\uparrow,x_\downarrow\}\subseteq Z$. % ( $\downarrow$ and $\uparrow$ are computed in $X$).
Denote the lexicographic product by $\lex$.

\begin{definition}\label{FoKonstrukcio}{\rm \cite{Jenei_Hahn}}
%{\bf (Partial Lexicographic Products)}
\rm
Let ${\mathbf X}=(X, \wedge_X,\vee_X, \ast, \ite{\ast}, t_X, f_X)$ be an odd FL$_e$-algebra
and ${\mathbf Y}=( Y, \wedge_Y,\vee_Y, \star, \ite{\star}, t_Y, f_Y )$
be an involutive FL$_e$-algebra, with residual complement $\kompM{\ast}$ and $\kompM{\star}$, respectively.
%\begin{enumerate}

\medskip
\begin{enumerate}
\item[A.]
Add a new element $\top$ to $Y$ as a top element and annihilator (for $\star$), then add a new element $\bot$ to $Y\cup\{\top\}$ as a bottom element and annihilator.
Extend $\kompM{\star}$ by $\negaM{\star}{\bot}=\top$ and $\negaM{\star}{\top}=\bot$.
%\begin{enumerate}
\begin{comment}
\item[(I)]\label{ittnincstetejealja} 
Let 
$\mathbf Z$ be a subalgebra of $\mathbf X_\mathbf{gr}$. 
Let
$$
\PLPI{X}{Z}{Y}= (Z\times(Y\cup\{\top,\bot\}))\cup \left((X\setminus Z)\times \{\bot\}\right) ,
$$
and
let the $\PLPI{\mathbf X}{\mathbf Z}{\mathbf Y}$,
the {\em type I (or disconnected) partial lexicographic product} 
of $\mathbf X,\mathbf Z$, and $\mathbf Y$ be given by 
$$\PLPI{\mathbf X}{\mathbf Z}{\mathbf Y}=\left(\PLPI{X}{Z}{Y}, \leq, \te, \ite{\te}, (t_X,t_Y),(f_X,f_Y)\right).$$
%$\leq$ is the restriction of the lexicographical order of $\leq_X$ and $\leq_{Y\cup\{\top,\bot\}}$ to $\PLPI{X}{Z}{Y}$, $\te$ is defined coordinatewise, and the operation $\ite{\te}$ is given by
%$
%\res{\te}{(x_1,y_1)}{(x_2,y_2)}=\nega{\left(\g{(x_1,y_1)}{\nega{(x_2,y_2)}}\right)} ,
%$
%where
%$$
%\nega{(x,y)}=\left\{
%\begin{array}{ll}
%(\negaM{\ast}{x},\bot) 		& \mbox{if $x\not\in Z$}\\
%(\negaM{\ast}{x},\negaM{\star}{y}) 	& \mbox{if $x\in Z$}\\
%\end{array}
%\right. .
%$$
\end{comment}
%\item[(III)]\label{ittnincstetejealja} 
Let 
$\mathbf V\leq\mathbf Z\leq\mathbf X_\mathbf{gr}$. 
%$\mathbf Z$ is a subalgebra of $\mathbf X_\mathbf{gr}$ and $\mathbf V$ be a subalgebra of $\mathbf Z$.
Let 
%$\PLPIII{X}{Z}{V}{Y}= (V\times Y)\cup (Z\times\{\top,\bot\})\cup \left((X\setminus Z)\times \{\bot\}\right),$
$$
\PLPIII{X}{Z}{V}{Y}= (V\times (Y\cup\{\top,\bot\}))\cup ((Z\setminus V)\times\{\top,\bot\})\cup \left((X\setminus Z)\times \{\bot\}\right),
$$
and let $\PLPIII{\mathbf X}{\mathbf Z}{\mathbf V}{\mathbf Y}$, the {\em type III partial lexicographic product} of $\mathbf X,\mathbf Z,\mathbf V$ and $\mathbf Y$ be given by
$$\PLPIII{\mathbf X}{\mathbf Z}{\mathbf V}{\mathbf Y}=\left(\PLPIII{X}{Z}{V}{Y}, \leq, \te, \ite{\te}, (t_X,t_Y),(f_X,f_Y)\right),$$
where $\leq$ is the restriction of the lexicographical order of $\leq_X$ and $\leq_{Y\cup\{\top,\bot\}}$ to $\PLPIII{X}{Z}{V}{Y}$, 
$\te$ is defined coordinatewise, and the operation $\ite{\te}$ is given by
$
\res{\te}{(x_1,y_1)}{(x_2,y_2)}=\nega{\left(\g{(x_1,y_1)}{\nega{(x_2,y_2)}}\right)} ,
$
where
$$
\nega{(x,y)}=\left\{
\begin{array}{ll}
(\negaM{\ast}{x},\bot) 		& \mbox{if $x\not\in Z$}\\
(\negaM{\ast}{x},\negaM{\star}{y}) 	& \mbox{if $x\in Z$}\\
\end{array}
\right. .
$$
In the particular case when $\mathbf V=\mathbf Z$, we use the simpler notation 
$\PLPI{\mathbf X}{\mathbf Z}{\mathbf Y}$ for $\PLPIII{\mathbf X}{\mathbf Z}{\mathbf V}{\mathbf Y}$
and call it
the {\em type I partial lexicographic product} 
of $\mathbf X,\mathbf Z$, and $\mathbf Y$.
\item[B.]\label{B}
Assume that $X_{gr}$ 
%=( X_{gr}, \wedge, \vee, \ast, \ite{\ast}, t_X, f_X )$ 
is discretely embedded into $X$.
Add a new element $\top$ to $Y$ as a top element and annihilator.
%and extend $\star$ by ${\top}\star{y}={y}\star{\top}=\top$ for $y\in Y\cup\{\top\}$. 
%Let $\mathbf Z=( X_{gr}, \wedge, \vee, \ast, \ite{\ast}, t_X, f_X )$ be a linearly ordered, discretely embedded\footnote{We mean that for $x\in X_{gr}$, it holds true that $x\notin\{ x_\uparrow,x_\downarrow\}\subset Z$ ( $\downarrow$ and $\uparrow$ are computed in $X$).}, prime and cancellative subalgebra of $\mathbf X$.
%\item[(II)]
\begin{comment}
Let 
$$\PLPII{X}{Y}=
%(X_{gr}\times  (Y\cup\{\top\}))\cup \left((X\setminus X_{gr})\times \{\top\}\right)
%=
\left(X\times \{\top\}\right)\cup(X_{gr}\times Y)
$$
and let 
$\PLPII{\mathbf X}{\mathbf Y}$, 
the {\em type II partial lexicographic product} of $\mathbf X$ and $\mathbf Y$ be given by
$$\PLPII{\mathbf X}{\mathbf Y}=\left(\PLPII{X}{Y}, \leq, \te, \ite{\te}, (t_X,t_Y),(f_X,f_Y)\right)$$

\item[(IV)]
\end{comment}
Let 
%$\mathbf V$ be a subalgebra of $\mathbf X_\mathbf{gr}$. 
$\mathbf V\leq\mathbf X_\mathbf{gr}$. 
Let
$$
%\PLPIV{X}{V}{Y}=(V\times  Y)\cup (X_{gr}\times\{\top\})\cup \left((X\setminus X_{gr})\times \{\top\}\right)
\PLPIV{X}{V}{Y}=(X\times \{\top\})\cup (V\times Y) 
$$
and let 
$\PLPIV{\mathbf X}{\mathbf V}{\mathbf Y}$, 
the {\em type IV partial lexicographic product} of $\mathbf X$, $\mathbf V$ and $\mathbf Y$ be given by
$$\PLPIV{\mathbf X}{\mathbf V}{\mathbf Y}=\left(\PLPIV{X}{V}{Y}, \leq, \te, \ite{\te}, (t_X,t_Y),(f_X,f_Y)\right),$$
where $\leq$ is the restriction of the lexicographical order of $\leq_X$ and $\leq_{ Y\cup\{\top\}}$ to 
$\PLPIV{X}{V}{Y}$,
$\te$ is defined coordinatewise, and the operation $\ite{\te}$ is given by
$
\res{\te}{(x_1,y_1)}{(x_2,y_2)}=\nega{\left(\g{(x_1,y_1)}{\nega{(x_2,y_2)}}\right)} ,
$
where $\komp$ is defined by
\begin{equation}\label{FuraNegaEXT}
\nega{(x,y)}=
\left\{
\begin{array}{ll}
(\negaM{\ast}{x},\top) 			& \mbox{if $x\not\in X_{gr}$ and $y=\top$}\\
((\negaM{\ast}{x})_\downarrow,\top) 	& \mbox{if $x\in X_{gr}$ and $y=\top$}\\
(\negaM{\ast}{x},\negaM{\star}{y}) 	& \mbox{if $x\in V$ and $y\in Y$}\\
\end{array}
\right. .
\end{equation}
In the particular case when $\mathbf V=\mathbf X_{\mathbf{gr}}$, we use the simpler notation 
$\PLPII{\mathbf X}{\mathbf Y}$ for $\PLPIV{\mathbf X}{\mathbf V}{\mathbf Y}$
and call it
the {\em type II partial lexicographic product} 
of $\mathbf X$ and $\mathbf Y$.
\end{enumerate}
\end{definition}

\begin{comment}
\begin{figure}\label{fig_sdughsdgh}
\begin{center}
  \includegraphics[width=0.39\textwidth]{} \ \ \ \ \ \ 
  \includegraphics[width=0.37\textwidth]{}
\caption{Visualization:
$\PLPI{\mathbb{R}}{\mathbb{Z}}{\mathbb{R}}$ (left) and
$\PLPII{\mathbb{Z}}{\mathbb{R}}$ (right) shrank into $]0,1[$}
\label{fig:1}
\end{center}
\end{figure}
\end{comment}

\begin{lemma}\label{SubLexiTheo}{\rm \cite{Jenei_Hahn}}
Adapt the notation of Definition~\ref{FoKonstrukcio}.
%$\PLPI{\mathbf X}{\mathbf Z}{\mathbf Y}$,
$\PLPIII{\mathbf X}{\mathbf Z}{\mathbf V}{\mathbf Y}$
%$\PLPII{\mathbf X}{\mathbf Y}$
and
$\PLPIV{\mathbf X}{\mathbf V}{\mathbf Y}$
are
involutive FL$_e$-algebras with the same rank\footnote{The rank of an involutive FL$_e$-algebra is positive if $t>f$, negative if $t<f$, and $0$ if $t=f$.} as that of $\mathbf Y$.
In particular, if $\mathbf Y$ is odd then so are 
%$\PLPI{\mathbf X}{\mathbf Z}{\mathbf Y}$,
$\PLPIII{\mathbf X}{\mathbf Z}{\mathbf V}{\mathbf Y}$
%$\PLPII{\mathbf X}{\mathbf Y}$
and
$\PLPIV{\mathbf X}{\mathbf V}{\mathbf Y}$.
In addition, 
$\PLPIII{\mathbf X}{\mathbf Z}{\mathbf V}{\mathbf Y}\leq\PLPI{\mathbf X}{\mathbf Z}{\mathbf Y}$ and
$\PLPIV{\mathbf X}{\mathbf V}{\mathbf Y}\leq\PLPII{\mathbf X}{\mathbf Y}$.
\end{lemma}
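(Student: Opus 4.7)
The plan is to leverage the already-established fact from \cite{Jenei_Hahn} that $\PLPI{\mathbf X}{\mathbf Z}{\mathbf Y}$ and $\PLPII{\mathbf X}{\mathbf Y}$ are themselves involutive FL$_e$-algebras. Since the operations of $\PLPIII{\mathbf X}{\mathbf Z}{\mathbf V}{\mathbf Y}$ and $\PLPIV{\mathbf X}{\mathbf V}{\mathbf Y}$ are defined by exactly the same formulas, and the order is the restriction of the ambient lexicographic one, it suffices to verify that the new underlying sets are subuniverses of the Type I and Type II algebras containing the constants $(t_X,t_Y)$ and $(f_X,f_Y)$. This single observation simultaneously delivers the algebraic claims (involutivity included, being inherited from the ambient algebra) and the subalgebra assertions $\PLPIII{\mathbf X}{\mathbf Z}{\mathbf V}{\mathbf Y}\leq\PLPI{\mathbf X}{\mathbf Z}{\mathbf Y}$ and $\PLPIV{\mathbf X}{\mathbf V}{\mathbf Y}\leq\PLPII{\mathbf X}{\mathbf Y}$.

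First I would check membership of the designated constants. As $\mathbf X$ is odd, $t_X=f_X$ lies in every subalgebra of $\mathbf X_\mathbf{gr}$, so in particular in $\mathbf V$; hence $(t_X,t_Y),(f_X,f_Y)\in V\times Y$, which is contained in both new universes. Closure under the operations is then a routine case analysis on the ``band'' to which each first coordinate belongs: for Type III the bands are $V$, $Z\setminus V$, $X\setminus Z$, and for Type IV they are $V$ and $X\setminus V$. Coordinatewise multiplication respects the bands because of the subalgebra chain $\mathbf V\leq\mathbf Z\leq\mathbf X_\mathbf{gr}$ and because $\top$, $\bot$ are annihilators in $Y\cup\{\top,\bot\}$; the explicit piecewise definitions of $\komp$ manifestly send each band to an admissible band; closure under $\ite{\te}$ then follows from the identity $\res{\te}{a}{b}=\nega{(\g{a}{\nega{b}})}$; and the lattice operations are handled by a short check that the bands respect the lexicographic comparisons of Types I and II. The rank statement is immediate from $t_X=f_X$: the lexicographic comparison of $(t_X,t_Y)$ with $(f_X,f_Y)$ reduces to the comparison of $t_Y$ with $f_Y$, which is precisely the rank of $\mathbf Y$, and when $\mathbf Y$ is odd the two constants coincide, making the whole structure odd.

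The main obstacle I anticipate lies in the Type IV negation on the slice where $x\in X_\mathbf{gr}$ and $y=\top$, which is not coordinatewise but is given by $((\negaM{\ast}{x})_\downarrow,\top)$. One has to verify that this prescription agrees with the corresponding negation in $\PLPII{\mathbf X}{\mathbf Y}$ (so that the subalgebra claim is genuine) and that it yields a bona fide involution. Here the discrete-embedding hypothesis of $X_{gr}$ in $X$ becomes essential: it guarantees that $x_\downarrow$ and $x_\uparrow$ are well-defined and mutually inverse on points coming from the group part, which is exactly what is needed to make the double application of $\komp$ return the original pair and to keep the resulting element inside $X\times\{\top\}$.
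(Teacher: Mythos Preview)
The paper does not actually supply a proof of this lemma; it is quoted verbatim from the external source \cite{Jenei_Hahn} and left unproved here. There is therefore no ``paper's own proof'' against which to compare your proposal.

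That said, your strategy is sound and is the natural one: reduce everything to the subalgebra check, so that the involutive FL$_e$-algebra structure, the rank statement, and the inclusions $\PLPIII{\mathbf X}{\mathbf Z}{\mathbf V}{\mathbf Y}\leq\PLPI{\mathbf X}{\mathbf Z}{\mathbf Y}$ and $\PLPIV{\mathbf X}{\mathbf V}{\mathbf Y}\leq\PLPII{\mathbf X}{\mathbf Y}$ all follow at once from the known properties of the Type~I and Type~II constructions. Your identification of the one delicate point---the Type~IV negation on the $X_{gr}\times\{\top\}$ slice, where $\nega{(x,\top)}=((\negaM{\ast}{x})_\downarrow,\top)$---is exactly right, and the discrete-embedding hypothesis is indeed what makes this well-defined and involutive. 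One small addition worth making explicit in the Type~IV subalgebra check: when $x\in X_{gr}\setminus V$ (so $(x,\top)\in\PLPIV{X}{V}{Y}$), you need $(\negaM{\ast}{x})_\downarrow\in X$ so that the negation lands back in $X\times\{\top\}$; this again uses that $X_{gr}$ is discretely embedded into $X$.
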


The following theorem asserts that up to isomorphism, any odd FL$_e$-chain which has only finitely many positive idempotent elements can be built by iterating finitely many times the type III and type IV partial lexicographic product constructions using only linearly ordered abelian groups, as building blocks.
\begin{theorem}\label{Hahn_type}{\rm \cite{Jenei_Hahn}}
%{{\bf (Type III \& IV group representation)}}
{{\bf (Group representation)}}
If $\mathbf X$ is an odd FL$_e$-chain, which has only $n\in\mathbb N$, $n\geq 1$ positive idempotent elements then there exist linearly ordered abelian groups $\mathbf G_i$ $(i\in\{1,2,\ldots,n\})$, 
$\mathbf V_1\leq\mathbf Z_1\leq \mathbf G_1$, $\mathbf V_i\leq\mathbf Z_i\leq\mathbf V_{i-1}\lex\mathbf G_i$ $(i\in\{2,\ldots,n-1\})$,
and a binary sequence $\iota\in \{III,IV\}^{\{2,\ldots,n\}}$
such that 
$\mathbf X\simeq\mathbf X_n$, where
$\mathbf X_1:=\mathbf G_1$ and for $i\in\{2,\ldots,n\}$,
\begin{equation}\label{EzABeszed}
\mathbf X_i:=
\left\{
\begin{array}{ll}
\PLPIII{\mathbf X_{i-1}}{\mathbf Z_{i-1}}{\mathbf V_{i-1}}{\mathbf G_i}	& \mbox{ if $\iota_i=III$}\\
\PLPIV{\mathbf X_{i-1}}{\mathbf V_{i-1}}{\mathbf G_i} 	& \mbox{ if $\iota_i=IV$}\\
\end{array}
\right. .
\end{equation}
\end{theorem}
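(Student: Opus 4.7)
I would proceed by induction on $n$, the number of positive idempotent elements of $\mathbf X$. For the base case $n=1$, note that $t$ is always positive idempotent (from $\g{t}{t}=t$), so $t$ is the only one. By Lemma~\ref{tau_lemma}(\ref{id_RangeT}), $\tau$ is then constantly equal to $t$; in an odd FL$_e$-chain this forces $\g{x}{\nega{x}}=t$ for every $x$, hence $X=X_{gr}$ and $\mathbf X$ is itself a linearly ordered abelian group. Taking $\mathbf G_1\defeq\mathbf X$ closes the base.

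For the inductive step, let $u$ be the greatest positive idempotent of $\mathbf X$. The plan is to peel off a topmost layer isomorphic to a linearly ordered abelian group $\mathbf G_n$, leaving behind an odd FL$_e$-chain $\mathbf X_{n-1}$ with exactly $n-1$ positive idempotents, to which the induction hypothesis applies. Concretely, I would form $\mathbf X_{n-1}$ from the interval $[\nega{u},u]$ (possibly with its endpoints and their immediate covers trimmed, depending on how $u$ sits inside $\mathbf X$) equipped with the restrictions of $\wedge,\vee,\te,\ite{\te}$; using Lemma~\ref{tau_lemma}(\ref{tauINHERITED}) one sees that this interval is closed under the operations and that its positive idempotents are exactly those of $\mathbf X$ strictly below $u$, together with $t$. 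The outer group $\mathbf G_n$ is then obtained as the quotient of $\mathbf X$ by the equivalence relation whose classes are the $\te$-translates of $[\nega{u},u]$; again by Lemma~\ref{tau_lemma}(\ref{tauINHERITED}) this quotient is collapsed onto a single positive idempotent and inherits a well-defined group operation from $\te$.

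The choice between $\iota_n=III$ and $\iota_n=IV$, and the identification of $\mathbf V_{n-1}\leq\mathbf Z_{n-1}\leq(\mathbf X_{n-1})_\mathbf{gr}$, is dictated by how the elements of $\mathbf X$ outside $[\nega{u},u]$ interact with the boundary: $\mathbf V_{n-1}$ records those inner group elements over which a full copy of $\mathbf G_n$ sits, $\mathbf Z_{n-1}\setminus \mathbf V_{n-1}$ records those over which only $\{\top,\bot\}$ sits, and the rest carry only $\{\bot\}$. If the group part of $\mathbf X_{n-1}$ is discretely embedded in $\mathbf X_{n-1}$ and the outer fibres pack densely against this boundary, the discreteness hypothesis of type IV is met and one takes $\iota_n=IV$; otherwise $\iota_n=III$. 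The desired isomorphism $\mathbf X\to\mathbf X_n$ is the pair map $x\mapsto(\pi(x),\rho(x))$, with $\pi$ projecting onto $\mathbf X_{n-1}$ and $\rho$ onto $\mathbf G_n$; multiplicativity follows from Lemma~\ref{tau_lemma}, and order and negation preservation from a direct inspection against the formulas defining $\te$ and $\komp$ in Definition~\ref{FoKonstrukcio}.

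The main obstacle is the simultaneous construction of $\mathbf X_{n-1}$ and $\mathbf G_n$ together with the precise identification of $\mathbf V_{n-1}$ and $\mathbf Z_{n-1}$. The delicate handling of the covers of $u$ and $\nega{u}$ is exactly what forces the distinction between types III and IV and explains the piecewise clauses in the definition of $\komp$ on $\PLPIV{\mathbf X}{\mathbf V}{\mathbf Y}$; once these pieces are in place, Lemma~\ref{SubLexiTheo} guarantees that the resulting iterated product is an odd FL$_e$-chain, and a direct verification of the isomorphism finishes the proof.
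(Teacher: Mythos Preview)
First, note that the paper does not contain a proof of Theorem~\ref{Hahn_type}: it is imported from \cite{Jenei_Hahn} and used as a black box. So there is no in-paper argument to compare against, and I can only assess your outline on its own terms.

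Your inductive framework and the base case are fine, but the inductive step has the two pieces identified the wrong way round. Look at the simplest nontrivial case $n=2$, say $\mathbf X\simeq\PLPI{\mathbf G_1}{\mathbf Z_1}{\mathbf G_2}$. The only positive idempotent above $t$ is $u=(t_{1},\top)$, and
\[
[\nega{u},u]=[(t_{1},\bot),(t_{1},\top)]=\{t_{1}\}\times\bigl(G_2\cup\{\top,\bot\}\bigr),
\]
which, after trimming its extremes, is $\mathbf G_2$, i.e.\ the \emph{last} group $\mathbf G_n$, not $\mathbf X_{n-1}=\mathbf G_1$. Dually, the $\te$-translates of this interval are the fibres over the various $g_1\in G_1$, and the quotient recovers $\mathbf G_1=\mathbf X_{n-1}$, not $\mathbf G_n$. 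So your assignment ``interval $\leadsto \mathbf X_{n-1}$, quotient $\leadsto \mathbf G_n$'' is exactly reversed.

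For $n\geq 3$ the choice of $u$ matters too. Peeling at the \emph{greatest} positive idempotent separates off the \emph{outermost} group $\mathbf G_1$ (as the quotient) from an inner algebra built out of $\mathbf G_2,\ldots,\mathbf G_n$ (as the interval); this inner algebra does have $n-1$ positive idempotents, but it is not the $\mathbf X_{n-1}$ of (\ref{EzABeszed}), and reassembling gives $\mathbf X$ with the group in the \emph{first} coordinate of the partial lexicographic product rather than the second. To land on the decomposition $\mathbf X_n=\mathbf X_{n-1}$-then-$\mathbf G_n$ demanded by (\ref{EzABeszed}), take $u$ to be the \emph{least} positive idempotent strictly above $t$: then $[\nega{u},u]$ (trimmed) is $\mathbf G_n$ and the quotient is $\mathbf X_{n-1}$. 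With that correction, the remainder of your plan---reading off $\mathbf V_{n-1}$ and $\mathbf Z_{n-1}$ from which fibres carry a full copy of $\mathbf G_n$, and deciding between types III and IV via discrete embedding of the group part---is the right shape.
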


\begin{remark}
Denote the one-element odd FL$_e$-algebra by $\mathbf 1$.
For any bounded odd FL$_e$-algebra $\mathbf X$, in its group representation $\mathbf G_1=\mathbf 1$, since all other linearly ordered groups are infinite and unbounded, and 
both type III and type IV constructions preserve boundedness of the first component.
\end{remark}

A linearly ordered set $(X,\leq)$ is called Dedekind complete if every non-empty subset of $X$ bounded from above has a supremum.

\begin{lemma}{\rm (\cite[Theorem 2.29]{real}}\label{realisLEMMA}
A linearly ordered set $(K,\leq)$ is order isomorphic to
$\mathbb R$ if and only if
$(K,\leq)$ possesses the following four properties:
$(K,\leq)$ has no least neither greatest element,
$(K,\leq)$ is densely ordered,  there exists a countable dense subset of  $(K,\leq)$, and $(K,\leq)$ is Dedekind complete.
\end{lemma}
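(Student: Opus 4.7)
The plan is to prove the two directions separately. The forward implication is immediate: every listed property is obviously preserved by order isomorphism, and $\mathbb R$ itself has no endpoints, is densely ordered, is Dedekind complete, and contains $\mathbb Q$ as a countable dense subset. So the substance lies in the converse.

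For the converse, let $D\subseteq K$ be a countable dense subset. First I would replace $D$ if necessary by $D\setminus\{\min D,\max D\}$ (should either extremum exist in $D$); because $K$ has no least and no greatest element and is densely ordered, $D$ with at most two elements removed is still countable and dense in $K$, and moreover has neither a least nor a greatest element. Now I would invoke the classical back-and-forth theorem of Cantor: any two countable, densely ordered sets without endpoints are order isomorphic. Applied to $D$ and $\mathbb Q$ this yields an order isomorphism $\varphi\colon D\to\mathbb Q$.

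Next I would extend $\varphi$ to a map $\Phi\colon K\to\mathbb R$ by
\[
\Phi(x)=\sup\{\varphi(d):d\in D,\,d\leq x\},
\]
using that the set on the right is non-empty (since $D$ has no least element and $K$ none either, combined with density of $D$) and bounded above in $\mathbb R$ (since $D$ has no greatest element in $K$ above $x$ — more carefully, one picks some $d_0\in D$ with $d_0>x$ and uses $\varphi(d_0)$ as an upper bound). The Dedekind completeness of $\mathbb R$ makes $\Phi$ well-defined, and it is straightforward to check that $\Phi$ extends $\varphi$ and is order-preserving. Injectivity follows because between any two distinct points of $K$ there lies an element of $D$ by density.

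The step that requires a bit of care is surjectivity of $\Phi$. Given $r\in\mathbb R$, one considers $A=\{d\in D:\varphi(d)\leq r\}$ and its preimage, then sets $x=\sup A$ in $K$, which exists by Dedekind completeness of $K$ (and non-triviality of $A$, using that $D$ has no least element and $\mathbb R$ has none). One then verifies $\Phi(x)=r$, separating the cases $r\in\mathbb Q$ and $r\notin\mathbb Q$; the irrational case uses density of $\mathbb Q$ in $\mathbb R$ together with density of $D$ in $K$ to squeeze $\Phi(x)$ between arbitrarily close rationals. The principal obstacle I anticipate is precisely this surjectivity argument, where one must juggle the two Dedekind completeness hypotheses together with both density statements to rule out any gap between $\Phi(K)$ and a target real number. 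Since the result is cited from \cite{real} it need not be reproven in the paper, but the sketch above is the standard route.
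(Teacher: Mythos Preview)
The paper does not prove this lemma at all: it is stated with a citation to \cite[Theorem 2.29]{real} and used as a black box, so there is no in-paper argument to compare against. Your sketch is the standard Cantor back-and-forth plus Dedekind-cut extension and is essentially correct; one minor remark is that the step of deleting possible endpoints of $D$ is in fact unnecessary, since if $D$ is dense in a $K$ with no least (resp.\ greatest) element then $D$ itself can have no least (resp.\ greatest) element.
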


For an introduction to the basic notions and basic results in the field of mathematical fuzzy logics, see \cite{introFUZZY}.
For the definition of $\mathbf{IUL}$ the reader is referred to \cite{GMPhD}.
The logic $\mathbf{IUL}^{fp}$ is defined as $\mathbf{IUL}$ extended by the following axiom: $\mathbf t \leftrightarrow\mathbf f$.
A mathematical fuzzy logic $L$ enjoys finite strong standard completeness if the following conditions are equivalent for each formula $\varphi$ and each finite theory $T$: 
(i) $T\vDash_L \varphi$. (ii) For each standard L-algebra\footnote{(A standard L-algebra is an L-algebra over the real unit interval $[0,1]$)} $\mathbf A$ and each $\mathbf A$-model $e$ of $T$, $e$ is an $\mathbf A$-model of $\varphi$.
A possible way of proving finite strong standard completeness is to embed finitely generated L-algebras into standard L-algebras, which we shall do in Theorem~\ref{BeAgyazatam}.

\section{$\mathbf{IUL}^{fp}$ enjoys finite strong standard completeness}\label{222}

%For a odd FL$_e$-chain $\mathbf X$ we say that non-invertible elements are scarce in $\mathbf X$ if for any two non-invertible elements there is some invertible element in between them, that is, for $x,y\in X\setminus X_{gr}$ such that $x<y$, there exists $z\in X_{gr}$ such that $x<z<y$.

To show that Theorem~\ref{Hahn_type} is applicable to our case, first we prove that each finitely generated odd FL$_e$-chain possesses only finitely many positive idempotents.

\begin{proposition}\label{fingen_finIdemp}
Any finitely generated odd FL$_e$-chain has only finitely many positive idempotent elements.
\end{proposition}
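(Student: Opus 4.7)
My plan is to combine the two parts of Lemma~\ref{tau_lemma}. Let $\mathbf X$ be an odd FL$_e$-chain generated by a finite set $G=\{g_1,\ldots,g_k\}$, and put $S=\{\tau(g_1),\ldots,\tau(g_k),t\}\subseteq X$, which is finite. Since by Lemma~\ref{tau_lemma}(\ref{id_RangeT}) the positive idempotents of $\mathbf X$ are exactly the values of $\tau$, it suffices to prove that $\tau(a)\in S$ for every $a\in X$.

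I would proceed by induction on the complexity of a term $u$ representing $a$ in the generators $g_1,\ldots,g_k$ and in the algebraic constant $t$ (recalling that $t=f$ in the odd case, and that $\tau(t)=t$ because $t$ is the monoidal unit). The base case is immediate from the definition of $S$. For the inductive step, whenever the outermost operation of $u$ is $\te$, $\ite{\te}$, or the residual complement $a\mapsto\nega{a}$, applying Lemma~\ref{tau_lemma}(\ref{tauINHERITED}) to the one-step expression in question yields that the $\tau$-value of the whole equals the maximum of the $\tau$-values of the immediate subterms; by the inductive hypothesis each of these lies in $S$, hence so does their maximum.

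The lattice operations $\wedge$ and $\vee$ are not covered by Lemma~\ref{tau_lemma}(\ref{tauINHERITED}), and here the chain hypothesis saves the day: since $\mathbf X$ is totally ordered, $x\wedge y\in\{x,y\}$ and $x\vee y\in\{x,y\}$, so $\tau(x\wedge y)$ and $\tau(x\vee y)$ each coincide with one of $\tau(x),\tau(y)$ and therefore belong to $S$ by induction. Coupled with Lemma~\ref{tau_lemma}(\ref{id_RangeT}), this yields the finite bound $|S|\leq k+1$ on the positive idempotents of $\mathbf X$.

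The only delicate point I foresee is the mismatch between Lemma~\ref{tau_lemma}(\ref{tauINHERITED}), which addresses only terms built from $\te$, $\ite{\te}$, and the residual complement, and the full algebraic signature, which also includes $\wedge$ and $\vee$. The linearity of the order dissolves this entirely by reducing the lattice operations to selection among the operands, so no genuine obstacle remains.
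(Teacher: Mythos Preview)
Your argument is correct and mirrors the paper's proof almost exactly: both induct on term complexity, invoke Lemma~\ref{tau_lemma}(\ref{tauINHERITED}) for $\te$, $\ite{\te}$ and $\komp$, use the total order to reduce $\wedge$ and $\vee$ to a choice of operand, and finish via Lemma~\ref{tau_lemma}(\ref{id_RangeT}). Your version is slightly more explicit (e.g., isolating $\tau(t)=t$ and bounding $|S|\le k+1$), but there is no substantive difference.
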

\begin{proof}
Since the order is total, $\{ \tau(x\wedge y),\tau(x\vee y)\}\in\{\tau(x), \tau(y)\}$ holds.
Therefore, using claim~\ref{tauINHERITED} in Lemma~\ref{tau_lemma}, 
an easy induction on the recursive structure of the formula that generates a given element $x$ of the algebra shows that
the $\tau$-value of $x$ coincides with the $\tau$-value of one of its generators or constants.
Therefore, all elements of the algebra share only finitely many $\tau$-values, since the algebra is finitely generated.
%Finitely many $\tau$-values mean finitely many positive idempotent elements by claim~\ref{id_RangeT} in Lemma~\ref{tau_lemma}.
Claim~\ref{id_RangeT} in Lemma~\ref{tau_lemma} concludes the proof.
\qed\end{proof}

\begin{proposition}\label{typeIIisVAN}
Any odd FL$_e$-chain which has finitely many positive idempotent elements is embeddable into an odd FL$_e$-chain,
which has the same number of positive idempotent elements, and
which has a type I-II group representation.
\end{proposition}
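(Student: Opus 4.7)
The plan is to apply the group representation theorem (Theorem~\ref{Hahn_type}) to express the given chain $\mathbf X$ as an iterated partial lexicographic product of linearly ordered abelian groups using type III and type IV steps, and then to \emph{upgrade} each step into the corresponding type I or type II step. The upgrade at a single level is handed to us for free by Lemma~\ref{SubLexiTheo}, which asserts $\PLPIII{\mathbf X}{\mathbf Z}{\mathbf V}{\mathbf Y}\leq\PLPI{\mathbf X}{\mathbf Z}{\mathbf Y}$ and $\PLPIV{\mathbf X}{\mathbf V}{\mathbf Y}\leq\PLPII{\mathbf X}{\mathbf Y}$; the work is in compounding these single-step embeddings along the tower.

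Concretely, write $\mathbf X\simeq\mathbf X_n$ with ingredients $\mathbf G_i$, $\mathbf V_i\leq\mathbf Z_i$, and a sequence $\iota\in\{III,IV\}^{\{2,\ldots,n\}}$ as in Theorem~\ref{Hahn_type}. Define $\mathbf X'_1=\mathbf G_1$ and, inductively, set $\mathbf X'_i=\PLPI{\mathbf X'_{i-1}}{\mathbf Z_{i-1}}{\mathbf G_i}$ when $\iota_i=III$, and $\mathbf X'_i=\PLPII{\mathbf X'_{i-1}}{\mathbf G_i}$ when $\iota_i=IV$, where $\mathbf Z_{i-1}$ is identified with its image under the embedding constructed so far. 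I would prove by induction on $i$ that there is an embedding $\varphi_i\colon\mathbf X_i\hookrightarrow\mathbf X'_i$ which (i) is the identity on the group part and (ii) maps $\mathbf Z_i$ (resp.\ $\mathbf V_i$) isomorphically onto a subalgebra of $\mathbf X'_{i,\mathbf{gr}}$, so the next step of the construction makes sense. The inductive step factors through Lemma~\ref{SubLexiTheo}: first the identity embedding $\mathbf X_i\leq\PLPI{\mathbf X_{i-1}}{\mathbf Z_{i-1}}{\mathbf G_i}$ (or the analogue for type IV), and then the embedding induced coordinatewise by $\varphi_{i-1}$ on the first coordinate and the identity on the second coordinate.

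For the idempotent count, I would verify at each level that the positive idempotent elements of $\PLPIII{\mathbf X_{i-1}}{\mathbf Z_{i-1}}{\mathbf V_{i-1}}{\mathbf G_i}$ and of $\PLPI{\mathbf X_{i-1}}{\mathbf Z_{i-1}}{\mathbf G_i}$ coincide: since $\mathbf G_i$ is a group, its only positive idempotent is $t_{\mathbf G_i}$; using Lemma~\ref{tau_lemma}.\ref{id_RangeT} and the fact that any element of $\mathbf Z_{i-1}$ that is idempotent in the group part must equal $t_{\mathbf X_{i-1}}$, the positive idempotents in both algebras are exhausted by $(t_{\mathbf X_{i-1}},t_{\mathbf G_i})$, $(t_{\mathbf X_{i-1}},\top)$, and $(x,\bot)$ for positive idempotents $x\notin Z_{i-1}$ of $\mathbf X_{i-1}$. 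The analogous calculation matches type IV with type II. Hence at every step the idempotent count is preserved, and by induction $\mathbf X_n$ and $\mathbf X'_n$ share the same number of positive idempotents.

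The main technical obstacle is the functoriality of the partial lexicographic product in its first argument: showing that, when $\varphi_{i-1}\colon\mathbf X_{i-1}\hookrightarrow\mathbf X'_{i-1}$ is an embedding identifying $\mathbf Z_{i-1}$ with a subalgebra of $\mathbf X'_{i-1,\mathbf{gr}}$, the induced map $(x,y)\mapsto(\varphi_{i-1}(x),y)$ is again an embedding of FL$_e$-algebras, i.e.\ it commutes with $\te$, with the residuum, and with $\komp$ (in particular with the case split in (\ref{FuraNegaEXT}) involving $(\negaM{\ast}{x})_\downarrow$). The residuum and negation require checking that the embedding respects covers: the step $x\mapsto x_\downarrow$ must be preserved, so one must verify that in the type II upgrade the discreteness hypothesis of Definition~\ref{FoKonstrukcio}.B holds for $\mathbf X'_{i-1}$, which is where the argument needs the most care. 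Once these bookkeeping lemmas are in place, the iteration runs to completion and $\mathbf X'_n$ is by construction a chain with a type I--II representation into which $\mathbf X$ embeds.
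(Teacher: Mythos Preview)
Your proposal is correct and follows essentially the same route as the paper: apply Theorem~\ref{Hahn_type} to obtain a type III--IV tower, replace each step by the corresponding type I or type II step over the same $\mathbf Z_{i-1}$, and prove $\mathbf X_i\leq\mathbf X'_i$ by induction, invoking Lemma~\ref{SubLexiTheo} at each stage. The paper works with literal subalgebras rather than abstract embeddings $\varphi_i$ (since the same $\mathbf G_i$ and $\mathbf Z_{i-1}$ are reused, no identification is needed), but this is only a cosmetic difference.

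Two remarks. First, you correctly flag the discreteness hypothesis in Definition~\ref{FoKonstrukcio}.B as the delicate point; the paper handles it by arguing that $\iota_i=IV$ forces $\mathbf G_{i-1}$ to be nontrivial and discretely ordered (else $(h,1)_\uparrow=(h,\top)\notin V_{i-2}\lex G_{i-1}$ would already break discreteness for $\mathbf Y_{i-1}$), and then observing that this suffices for $(\mathbf X'_{i-1})_{\mathbf{gr}}$ to be discretely embedded in $\mathbf X'_{i-1}$. Second, you go beyond the paper in sketching why the positive-idempotent count is preserved; the paper's proof asserts this in the statement but does not verify it, presumably because each extension step (type I or type III, resp.\ type II or type IV) adds exactly one new positive idempotent, namely $(t,\top)$, regardless of the choice of $\mathbf V_{i-1}$.
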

\begin{proof}
Let $\mathbf Y$ be an odd FL$_e$-chain which has finitely many positive idempotent elements, and let a type III-IV group representation of $\mathbf Y$ be given, according to Theorem~\ref{Hahn_type}, 
that is,
$\mathbf G_i$ $(i\in\{1,2,\ldots,n\})$ linearly ordered abelian groups, 
$\mathbf V_{1}\leq\mathbf Z_{1}\leq \mathbf G_1$ and 
$\mathbf V_{i}\leq\mathbf Z_{i}\leq\mathbf V_{i-1}\lex\mathbf G_i$
($(i\in\{2,\ldots,n-1\})$),
and $\iota\in \{III,IV\}^{\{2,\ldots,n\}}$ such that 

(i) $\mathbf Y\simeq\mathbf Y_n$, where
\begin {equation}\label{EzazY}
\mathbf Y_1=\mathbf G_1 \mbox{\ and \ } 
\mathbf Y_i=
\left\{
\begin{array}{ll}
\PLPIII{\mathbf Y_{i-1}}{\mathbf Z_{i-1}}{\mathbf V_{i-1}}{\mathbf G_i} & \mbox{if $i\in\{2,\ldots,n\}$ and $\iota_i=III$}\\
\PLPIV{\mathbf Y_{i-1}}{\mathbf V_{i-1}}{\mathbf G_i} & \mbox{if $i\in\{2,\ldots,n\}$ and $\iota_i=IV$}\\
\end{array}
\right.
\end{equation}

(ii) 
for $i\in\{2,\ldots,n\}$,
if $\iota_i=IV$ then 
$\mathbf Z_{i-1}=(\mathbf Y_{i-1})_\mathbf{gr}$ and 
$(\mathbf Y_{i-1})_\mathbf{gr}$ is discretely embedded into $\mathbf Y_{i-1}$.

\noindent
We define   
\begin{equation}\label{ezLESZazX}
\mathbf X_1=\mathbf G_1 \mbox{\ and \ } 
\mathbf X_i=
\left\{
\begin{array}{ll}
\PLPI{\mathbf X_{i-1}}{\mathbf Z_{i-1}}{\mathbf G_i} & \mbox{if $i\in\{2,\ldots,n\}$ and $\iota_i=III$}\\
\PLPII{\mathbf X_{i-1}}{\mathbf G_i} & \mbox{if $i\in\{2,\ldots,n\}$ and $\iota_i=IV$}\\
\end{array}
\right. .
\end{equation}
\begin{comment}
$$
\mathbf X_i=
\left\{
\begin{array}{ll}
\mathbf G_1 & \mbox{if $i=1$}\\
\PLPII{\mathbf X_{i-1}}{\mathbf G_i} & \mbox{if $i\in\{2,\ldots,n\}$ and $\iota_i=III$}\\
\PLPI{\mathbf X_{i-1}}{\mathbf Z_{i-1}}{\mathbf G_i} & \mbox{if $i\in\{2,\ldots,n\}$ and $\iota_i=IV$}\\
\end{array}
\right.
$$
\end{comment}
%, \mbox{ and } \mathbf X=\mathbf X_n.$$
To see that $\mathbf X_i$ is well-defined, we need to verify, according to Definition~\ref{FoKonstrukcio}, that
\begin{itemize}
\item for $i\in\{2,\ldots,n-1\}$,
$\mathbf Z_{i-1}\leq(\mathbf X_{i-1})_\mathbf{gr}$ holds.

Indeed, 
for $i=2$,
$(\mathbf X_1)_\mathbf{gr}$ is equal to $\mathbf G_1$ and $\mathbf Z_{1}\leq \mathbf G_1$ holds by assumption.
For $i>2$, by assumption, $\mathbf Z_{i-1}\leq\mathbf V_{i-2}\lex\mathbf G_{i-1}$ which is equal to $(\mathbf Y_{i-1})_\mathbf{gr}$. Also by assumption, the latest is a subalgebra of $\mathbf Z_{i-2}\lex\mathbf G_{i-1}$ which is equal to $(\mathbf X_{i-1})_\mathbf{gr}$.

\item 
In addition, we need to verify that 
for $i\in\{2,\ldots,n\}$, if $\iota_i=IV$ then 
%$\mathbf Z_{i-1}=(\mathbf X_{i-1})_\mathbf{gr}$ and 
$(\mathbf X_{i-1})_\mathbf{gr}$ is discretely embedded into $\mathbf X_{i-1}$.

If $i=2$ then it holds by assumption that 
%$\mathbf Z_{1}=(\mathbf Y_1)_\mathbf{gr}=\mathbf G_1=(\mathbf X_1)_\mathbf{gr}$ and
$(\mathbf X_1)_\mathbf{gr}=(\mathbf Y_1)_\mathbf{gr}$ is discretely embedded into
$\mathbf Y_1=\mathbf X_1$.

For $i>2$, by assumption it holds that 
$(\mathbf Y_{i-1})_\mathbf{gr}=\mathbf V_{i-2}\lex\mathbf G_{i-1}$ is discretely embedded into $\mathbf Y_{i-1}$.
That is, 
\begin{equation}\label{LexiDiscrEmb}
\mbox{
for $x\in\mathbf V_{i-2}\lex\mathbf G_{i-1}$ it holds true that
$x\notin\{ x_\uparrow,x_\downarrow\}\subseteq \mathbf V_{i-2}\lex\mathbf G_{i-1}$,
} 
\end{equation}
where $x_\downarrow$ and $x_\uparrow$ are computed in $\mathbf Y_{i-1}$.
It cannot be that case that $\mathbf G_{i-1}$ is trivial, since then for $h\in\mathbf V_{i-2}$, $(h,1)_\uparrow=(h,\top)$ would not be an element of $\mathbf V_{i-2}\lex\mathbf G_{i-1}$.
Thus $\mathbf G_{i-1}$ is unbounded.
Therefore, because of the lexicographic ordering on $Y_{i-1}$, (\ref{LexiDiscrEmb}) holds if and only if  $\mathbf G_{i-1}$ is discretely ordered.
Since $\mathbf G_{i-1}$ is discretely ordered, it follows that
$(\mathbf X_{i-1})_\mathbf{gr}$ which is equal to either
$\mathbf Z_{i-2}\lex\mathbf G_{i-1}$
or
$(\mathbf X_{i-2})_\mathbf{gr}\lex\mathbf G_{i-1}$ is discretely embedded into $\mathbf X_{i-1}$, as required.
\end{itemize}
We claim that for $i\in\{1,\ldots,n\}$, $\mathbf Y_i \leq \mathbf X_i$ holds.
Indeed, the claim being straightforward for $i=1$, assume $\mathbf Y_{i-1} \leq \mathbf X_{i-1}$ for $i\in\{2,\ldots,n\}$.
By Lemma~\ref{SubLexiTheo}, 
$\mathbf Y_i=\PLPIII{\mathbf Y_{i-1}}{\mathbf Z_{i-1}}{\mathbf V_{i-1}}{\mathbf G_i}\leq\PLPI{\mathbf Y_{i-1}}{\mathbf Z_{i-1}}{\mathbf G_i}$
(if $\iota_i=III$) or
$\mathbf Y_i=\PLPIV{\mathbf Y_{i-1}}{\mathbf V_{i-1}}{\mathbf G_i}\leq\PLPII{\mathbf Y_{i-1}}{\mathbf G_i}$ (if $\iota_i=IV$).
Using $\mathbf Z_{i-1}\leq(\mathbf X_{i-1})_\mathbf{gr}$ (see above),
together with $\mathbf Y_{i-1}\leq\mathbf X_{i-1}$ (see the induction hypothesis),
it follows that
$
\PLPI{\mathbf Y_{i-1}}{\mathbf Z_{i-1}}{\mathbf G_i}
$
is a subalgebra of
$
\PLPI{\mathbf X_{i-1}}{\mathbf Z_{i-1}}{\mathbf G_i}
=\mathbf X_i
$
(if $\iota_i=III$)
or
$
\PLPII{\mathbf Y_{i-1}}{\mathbf G_i}
$
is a subalgebra of
$
\PLPII{\mathbf X_{i-1}}{\mathbf G_i}
=\mathbf X_i
$
(if $\iota_i=IV$).
Thus we have shown 
$$\mathbf Y\simeq\mathbf Y_n\leq\mathbf X_n,$$ as required.
\qed\end{proof}

\begin{proposition}\label{aGROUPREPRisFINITELYgeneratedTAGOKBOLall}
If $\mathbf Y$ is a finitely generated odd FL$_e$-chain then all the groups in its group representation are finitely generated, too.
\end{proposition}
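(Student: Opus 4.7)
I would prove this by induction on $n$, the number of positive idempotent elements of $\mathbf Y$. By Proposition~\ref{fingen_finIdemp} this number is finite when $\mathbf Y$ is finitely generated, and by Theorem~\ref{Hahn_type} it equals the number of groups in the representation. The base case $n = 1$ is immediate: $\mathbf Y \simeq \mathbf G_1$ is itself a lattice-ordered abelian group, the FL$_e$-operations $\te$ and ${}'$ coincide there with the group multiplication and inversion, so any finite FL$_e$-generating set is automatically a finite group-generating set.

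For the inductive step, fix $n \geq 2$ and a finite generating set $H = \{h_1, \ldots, h_k\}$ of $\mathbf Y \simeq \mathbf Y_n$. I would first show that $\mathbf G_n$ is finitely generated via the second-coordinate projection $\pi_2 \colon Y_n \to G_n \cup \{\top, \bot\}$ in the type III case, respectively $\pi_2 \colon Y_n \to G_n \cup \{\top\}$ in the type IV case. By Definition~\ref{FoKonstrukcio}, $\te$ is coordinatewise and negation acts coordinatewise on the second coordinate whenever the first coordinate lies in $Z_{n-1}$ (resp.\ $V_{n-1}$), the remaining cases only contributing the boundary elements $\top, \bot$ under $\pi_2$. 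Since $\langle H \rangle = Y_n$, $\pi_2(\langle H \rangle)$ covers all of $G_n$, and a short inspection shows that $\pi_2(H) \cap G_n$, closed under the group operations of $\mathbf G_n$, already produces $G_n$. Hence $\mathbf G_n$ is finitely generated.

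Second, I would show that $\mathbf Y_{n-1}$ is finitely generated; the induction hypothesis applied to $\mathbf Y_{n-1}$, which has $n - 1$ positive idempotents, then yields finite generation of $\mathbf G_1, \ldots, \mathbf G_{n-1}$. Consider the first-coordinate projection $\pi_1 \colon Y_n \to Y_{n-1}$. In the type III case, $\pi_1$ is a surjective FL$_e$-homomorphism---both $\te$ and negation act coordinatewise in the first coordinate, independently of the second---so $\pi_1(H)$ is the required finite generating set. The type IV case is subtler because of the third line of~(\ref{FuraNegaEXT}): for $y \in X_{\mathbf{gr}}$ with $g = \top$ one has $\pi_1((y, \top)') = (y')_\downarrow \neq y'$, so $\pi_1$ fails to commute with negation on the ``$X_{\mathbf{gr}} \times \{\top\}$'' layer. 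My approach is to exploit the discrete embedding of $X_{\mathbf{gr}}$: on $X_{\mathbf{gr}}$ the ``ghost'' operation $y \mapsto (y')_\downarrow$ reduces to multiplication of $y'$ by a fixed unit-step element $\delta$, and $\delta$ itself is accessible since $\pi_1(\langle H \rangle) = Y_{n-1}$. Augmenting $\pi_1(H)$ by the finite set $\{\pi_1(h') : h \in H\}$ together with $\delta$ then produces, through a bookkeeping argument, a finite FL$_e$-generating set of $\mathbf Y_{n-1}$. The main obstacle of the proof is precisely this bookkeeping in the type IV case: converting set-theoretic surjectivity of $\pi_1$ into genuine finite FL$_e$-generation of $\mathbf Y_{n-1}$, despite $\pi_1$ not being a full FL$_e$-homomorphism on the top layer.
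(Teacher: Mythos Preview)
Your approach is correct and runs along a different route from the paper's. The paper argues in two passes: first a downward induction showing each $\mathbf Y_i$ ($i=n,\ldots,1$) is finitely generated, invoking only that a copy of $Y_{i-1}$ sits inside $Y_i$; then an upward induction extracting finite generation of each $\mathbf G_i$ and $\mathbf Z_i$ from that of $\mathbf Y_i$, using that $\{t\}\times G_i$ and $Z_{i-1}\times\{t_{G_i}\}$ sit inside $Y_i$. Both passes are terse and do not spell out the retractions that would justify these implications; you, by contrast, make the projections explicit and isolate exactly where the first projection fails to be an FL$_e$-homomorphism---the $(x')_\downarrow$ clause of~(\ref{FuraNegaEXT}) in the type~IV case---and repair it. Your fix via the step element $\delta$ is right and in fact lighter than advertised: an induction on terms shows that $\pi_1(\tau(\bar h))$ always lies in the FL$_e$-subalgebra of $\mathbf Y_{n-1}$ generated by $\pi_1(H)\cup\{\delta\}$, because $\pi_1$ commutes with $\te,\wedge,\vee$ and the constants, while for negation one has $\pi_1(a')\in\{\pi_1(a)',\,\pi_1(a)'\te\delta\}$, both values remaining in that subalgebra. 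So the extra generators $\pi_1(h')$ you list are unnecessary. Your argument for $\mathbf G_n$ via $\pi_2$ is likewise sound once one notes that on a chain the lattice operations merely select one argument, whence the $G_n$-values reachable from $H$ form exactly the subgroup generated by $\pi_2(H)\cap G_n$.
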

\begin{proof}
Let $\mathbf Y$ be a finitely generated odd FL$_e$-algebra, and consider its type III-IV group representation, as in the proof of Proposition~\ref{typeIIisVAN}.
We start by observing that 
$
\mathbf Y_n
$
is finitely generated, since so is
$
\mathbf Y
$.
Since
$$
\mathbf Y_n=
\left\{
\begin{array}{ll}
\PLPIII{\mathbf Y_{n-1}}{\mathbf Z_{n-1}}{\mathbf V_{n-1}}{\mathbf G_n} & \mbox{if $\iota_n=III$}\\
\PLPIV{\mathbf Y_{n-1}}{\mathbf V_{n-1}}{\mathbf G_n} & \mbox{if $\iota_n=IV$}\\
\end{array}
\right.
$$
and 
$Y_{n-1}\times\{\top\}$ is a subset of the universe of $\mathbf Y_n$ in both cases,
it follows that 
$
\mathbf Y_{n-1}
$
%$\mathbf H_{n-1}$ and $\mathbf G_n$ are 
is finitely generated, too, so an easy downward induction shows that 
$
\mathbf Y_i
$
is finitely generated for $i\in\{n,n-1,\ldots,1\}$.
In particular, $\mathbf G_1=\mathbf Y_1$ is is finitely generated.
Since 
$$
\mathbf Y_2=
\left\{
\begin{array}{ll}
\PLPIII{\mathbf Y_1}{\mathbf Z_1}{\mathbf V_1}{\mathbf G_2} & \mbox{if $\iota_2=III$}\\
\PLPIV{\mathbf Y_1}{\mathbf V_1}{\mathbf G_2} & \mbox{if $\iota_2=IV$}\\
\end{array}
\right. ,
$$
and
$
\mathbf Y_2
$
is finitely generated, 
and $\{t\}\times G_2$ and $Z_1\times\{t_{G_2}\}$ are subsets of the universe of $\mathbf Y_2$,
it follows that
$
\mathbf G_2
$
and
$
\mathbf Z_{1}
$
are finitely generated, and so on. 
%Since $\mathbf Y_3={\mathbf Y_2}_{\mathbf\Gamma(\mathbf H_{2,1}, ^{\iota_3})}^{\mathbf\Gamma(\mathbf H_{2,2}, \mathbf G_3)}$ and $\mathbf Y_3$ is finitely generated, therefore $\mathbf H_{2,1}$, $\mathbf H_{2,2}$, and $\mathbf G_3$ are finitely generated, too.
So an (upward) induction shows that 
$$
\mbox{
$
\mathbf G_i
$ ($i\in\{1,\ldots,n\}$)
\ and \  
$\mathbf Z_{i}$ %and $\mathbf V_{i}$
($i\in\{1,\ldots,n-1\}$)
are finitely generated.
}
$$
\qed\end{proof}

Next we show that two (and thus also finitely many) consecutive type II extensions can be replaced by a single type II extension.
More formally, we claim that 
\begin{proposition}\label{AB-C=A-BC}
For any odd FL$_e$-algebras $\mathbf A$, $\mathbf B$, $\mathbf C$, it holds true that
$$
\PLPII{(\PLPII{\mathbf A}{\mathbf B})}{\mathbf C}\simeq\PLPII{\mathbf A}{(\PLPII{\mathbf B}{\mathbf C})}
,
$$ 
that is, if the algebra on one side is well-defined then the algebra on the other side is well-defined, too, and the two algebras are isomorphic.
\end{proposition}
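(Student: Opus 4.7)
The plan is to exhibit an explicit bijection between the two underlying sets and verify that it preserves the lattice order, the monoid operation, and the residual complement; since the constants match automatically, this yields the claimed isomorphism. Writing $\mathbf D=\PLPII{\mathbf A}{\mathbf B}$ and $\mathbf E=\PLPII{\mathbf B}{\mathbf C}$, I would first observe that, because the unit of a type II product is the pair of units and the adjoined $\top$ is never a group element, one has $D_{gr}=A_{gr}\times B_{gr}$ and $E_{gr}=B_{gr}\times C_{gr}$.

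Next, I would unfold both universes into a common disjoint union. Denoting the adjoined tops by $\top_B$, $\top_C$, and $\top_E$, the universe of $\PLPII{\mathbf D}{\mathbf C}$ splits as
$$
(A\times\{\top_B\}\times\{\top_C\})\,\cup\,(A_{gr}\times B\times\{\top_C\})\,\cup\,(A_{gr}\times B_{gr}\times C),
$$
while the universe of $\PLPII{\mathbf A}{\mathbf E}$ becomes the same set after identifying $\top_E$ with $(\top_B,\top_C)$ and using $A_{gr}\times E=(A_{gr}\times B\times\{\top_C\})\cup(A_{gr}\times B_{gr}\times C)$. The obvious ``drop parentheses'' map is the candidate isomorphism.

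Well-definedness on the two sides reduces, I claim, to the same two conditions: $A_{gr}$ discretely embedded in $A$ and $B_{gr}$ discretely embedded in $B$. The former appears explicitly on each side. For the latter, assuming $A_{gr}$ discretely embedded in $A$, I would argue that in the lexicographically ordered $D$ the cover relations at a point $(a,b)\in A_{gr}\times B_{gr}$ are controlled by the cover relations at $b$ in $B$: the immediate neighbours of $(a,b)$, if they exist, must lie in the same $A$-fiber, so they are exactly $(a,b_\uparrow)$ and $(a,b_\downarrow)$ computed in $B$, and both lie in $A_{gr}\times B_{gr}=D_{gr}$ precisely when $b_\uparrow$ and $b_\downarrow$ lie in $B_{gr}$. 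Hence $D_{gr}$ is discretely embedded in $D$ if and only if $B_{gr}$ is discretely embedded in $B$.

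Preservation of the order (lexicographic on both sides) and of the monoid operation (coordinatewise with every $\top$ acting as annihilator) is routine. The hard part will be the residual complement, because of the $(\cdot)_\downarrow$ appearing in the second clause of (\ref{FuraNegaEXT}). I would split by the three blocks of the universe and compare the three clauses of the complement formula on each side. The essential nontrivial identification is that, for $a\in A_{gr}$ and $b\in B_{gr}$, the $D$-$\downarrow$ of the $\mathbf D$-complement $(a^\prime,b^\prime)$ of $(a,b)$ equals $(a^\prime,(b^\prime)_\downarrow)$ with the inner $\downarrow$ taken in $B$: this holds because the immediate predecessor in the lex-ordered $D$ stays in the same $A$-fiber. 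Once this identification is in hand, the three clause-by-clause matchings of the complements drop out, and the isomorphism is established.
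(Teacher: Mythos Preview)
Your proposal is correct and follows essentially the same decomposition of the two universes as the paper does. The one substantive difference is in how you handle the residual complement. You plan to verify the formula (\ref{FuraNegaEXT}) clause by clause, reducing the key case to the identity $((a',b'))_\downarrow=(a',(b')_\downarrow)$ in $D$; this works, but the paper sidesteps the whole computation with a one-line observation: once the underlying sets, the order, and the monoidal operation coincide (the latter being coordinatewise on both sides), the residual operation is \emph{uniquely determined} by residuation, so the residuals---and hence the residual complements---coincide automatically. This saves all of the $\downarrow$ bookkeeping.

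Your well-definedness argument (that $D_{gr}$ is discretely embedded in $D$ iff $B_{gr}$ is discretely embedded in $B$, given that $A_{gr}$ is discretely embedded in $A$) is the right equivalence, and it is also what the paper proves, though the paper phrases the forward direction by first excluding $|B_{gr}|=1$. One small caution: your claim that the immediate neighbours of $(a,b)\in D_{gr}$ ``must lie in the same $A$-fiber'' is not literally true at the extremes of $B$ (e.g.\ if $b$ were minimal in $B$, the lower neighbour of $(a,b)$ would be $(a_\downarrow,\top)$), but in those boundary cases discrete embedding fails on both sides simultaneously, so the equivalence survives. You may want to tighten that sentence.
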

\begin{proof}
By Definition~\ref{FoKonstrukcio}, the universe of
$\PLPII{\mathbf A}{\mathbf B}$ is 
$(A_{gr}\times B)\cup(A\times\{\top\}),$
and hence the universe of
$(\PLPII{\mathbf A}{\mathbf B})_\mathbf{gr}$
is
$A_{gr}\times B_{gr}.$
Therefore, the universe of
$
\PLPII{(\PLPII{\mathbf A}{\mathbf B})}{\mathbf C}
$
is
$(A_{gr}\times B_{gr}\times C)\cup(A_{gr}\times B\times \{\top\})\cup(A\times\{\top\}\times\{\top\})$.
On the other hand, the universe of
$\PLPII{\mathbf B}{\mathbf C}$
is
$(B_{gr}\times C)\cup(B\times\{\top\}).$
Therefore, 
by denoting $(\top,\top)$ the new top element added to 
$\PLPII{\mathbf B}{\mathbf C}$,
the universe of 
$\PLPII{\mathbf A}{(\PLPII{\mathbf B}{\mathbf C})}
$
is
$(A_{gr}\times B_{gr}\times C)\cup(A_{gr}\times B\times\{\top\})\cup(A\times\{\top\}\times\{\top\}),$
so the underlying universes 
of $\PLPII{(\PLPII{\mathbf A}{\mathbf B})}{\mathbf C}$
and
$\PLPII{\mathbf A}{(\PLPII{\mathbf B}{\mathbf C})}$
coincide.
Clearly, the unit elements are the same. 
Since the monoidal operation of a type II extension is defined coordinatewise, the respective monoidal operations coincide, too.
Since both algebras are residuated and the monoidal operation uniquely determines its residual operation, it follows that the residual operations coincide, too, hence so do the residual complements.
\\
Finally, the left-hand side is well defined if and only if $A_{gr}$ is discretely embedded into $A$, and $A_{gr}\times B_{gr}$ is discretely embedded into $(A_{gr}\times B)\cup(A\times\{\top\})$. 
It cannot be the case that $|B_{gr}|=1$, since then for $a\in A_{gr}$, $(a,1_B)\in A_{gr}\times B_{gr}$ but $(a,1_B)_\uparrow=(a,\top)\notin A_{gr}\times B_{gr}$.
Therefore $|B_{gr}|=\infty$, and hence, using that $A_{gr}\times B_{gr}$ is discretely embedded into $(A_{gr}\times B)\cup(A\times\{\top\})$, it follows that $B_{gr}$ is discretely embedded into $B$.
Thus $\PLPII{\mathbf B}{\mathbf C}$ and also the right-hand side is well-defined, too.
On the other hand, the right-hand side is well-defined if and only if $A_{gr}$ is discretely embedded into $A$, and $B_{gr}$ is discretely embedded into $B$.
But then clearly $A_{gr}\times B_{gr}$ is discretely embedded into $(A_{gr}\times B)\cup(A\times\{\top\})$, and hence the left-hand side is well-defined, too.
\qed\end{proof}

Next, we define three series of odd FL$_e$-chains and state some basic properties of them.
\begin{definition}
For $j\in\mathbb N$, $j\geq 1$, let
$$
\begin{array}{lllllll}
\mathbf Z_0&:=&\mathbf Z_1&:=&\mathbb Z, \ \ \ \mathbf Z_{j+1}&:=&\PLPII{\mathbb Z}{\mathbf Z_j}, \\
\mathbf Q_0&:=&\mathbf Q_1&:=&\mathbb Q, \ \ \ \mathbf Q_{j+1}&:=&\PLPI{\mathbb Q}{\mathbb Z}{\mathbf Q_j}, \\
\mathbf R_0&:=&\mathbf R_1&:=&\mathbb R, \ \ \ \mathbf R_{j+1}&:=&\PLPI{\mathbb R}{\mathbb Z}{\mathbf R_j}.
\end{array}
$$
\end{definition}
\begin{proposition}\label{SpeciAlgebrak}
For $j\geq 1$, the following statements hold true.
\begin{enumerate}
\item
\begin{enumerate}
\item\label{charZj}
$Z_j=\{ (x_1,\ldots,x_j)\in\mathbb Z\times(\mathbb Z\cup\{\top\})\times\ldots\times(\mathbb Z\cup\{\top\}) \ |$ if $x_i=\top$ for some $i\in\{2,\ldots,j\}$ then for all $i\leq l\leq j$, $x_l=\top \}$,
\item\label{ZGRalaphalmaza}
$(\mathbf Z_j)_\mathbf{gr}=\underbrace{\mathbb Z\lex\ldots\lex \mathbb Z}_{j}$,
\item\label{ZjDISCemb}
$\mathbf Z_j$ is countable, 
%\item
$\mathbf Z_j$ has no least neither greatest element,
%\item
$\mathbf Z_j$ is Dedekind complete,
\item\label{ZjDISCRETELY}
$(\mathbf Z_j)_\mathbf{gr}$ is discretely embedded into $\mathbf Z_j$, and 
%\item\label{kozotteVAN}
for any $x\in\mathbf Z_j\setminus (\mathbf Z_j)_\mathbf{gr}$ and $x\neq y\in\mathbf Z_j$, there exists $z\in(\mathbf Z_j)_\mathbf{gr}$ such that $z$ is strictly in between $x$ and $y$.
\item\label{noddogel}
$\PLPII{\mathbf Z_j}{\mathbf Z_k}\simeq \mathbf Z_{j+k}$.
\end{enumerate}
\item
\begin{enumerate}
\item\label{RGRigyNEZki}
$
(\mathbf R_j)_\mathbf{gr}=\underbrace{\mathbb Z\lex\ldots\lex \mathbb Z}_{j-1}\lex\mathbb R ,
$
\item\label{RiAZr}
$\mathbf R_{k_i}$ ($i\in\{1,\ldots,n\}$) is order isomorphic to $\mathbb R$.
\end{enumerate}
\end{enumerate}
\end{proposition}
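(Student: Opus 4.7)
The plan is to prove all claims by induction on $j$ (with claim~(\ref{noddogel}) requiring a double induction on $j$ and $k$), reading off the needed properties directly from Definition~\ref{FoKonstrukcio} and invoking Proposition~\ref{AB-C=A-BC} and Lemma~\ref{realisLEMMA} where necessary. The base cases $j=1$ reduce to the trivial properties of the single group $\mathbb Z$, $\mathbb Q$, or $\mathbb R$.

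For Part~1: unfolding $\PLPII{\mathbb Z}{\mathbf Z_j}$ via Definition~\ref{FoKonstrukcio}(B) gives the universe $(\mathbb Z \times \{\top\}) \cup (\mathbb Z \times Z_j) = \mathbb Z \times (Z_j \cup \{\top\})$ (using $\mathbb Z_{gr} = \mathbb Z$), and the tuple description of claim~(\ref{charZj}) emerges by iteration. For claim~(\ref{ZGRalaphalmaza}), a short computation from the coordinatewise monoidal operation and the piecewise residual complement in Definition~\ref{FoKonstrukcio}(B) shows that the group part of $\PLPII{\mathbf X}{\mathbf Y}$ equals $X_{gr} \times Y_{gr}$, which iterates to $\mathbb Z^j$. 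Countability and absence of extrema in claim~(\ref{ZjDISCemb}) are immediate from the tuple description; Dedekind completeness is discussed below. In claim~(\ref{ZjDISCRETELY}), the lexicographic covers of $(a_1, \ldots, a_j) \in \mathbb Z^j$ are $(a_1, \ldots, a_j \pm 1)$, both in $\mathbb Z^j$, so $(\mathbf Z_j)_\mathbf{gr}$ is discretely embedded; and for any $x = (a_1, \ldots, a_i, \top, \ldots, \top) \in \mathbf Z_j \setminus (\mathbf Z_j)_\mathbf{gr}$ and any $y \neq x$, the unboundedness of $\mathbb Z$ in every coordinate lets one slot an integer tuple strictly between $x$ and $y$ by suitably adjusting the first coordinate at which $y$ exceeds $(a_1, \ldots, a_i)$. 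Claim~(\ref{noddogel}) is obtained by iterated rebracketing via Proposition~\ref{AB-C=A-BC} applied to the recursive definition $\mathbf Z_{j+1} = \PLPII{\mathbb Z}{\mathbf Z_j}$.

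For Part~2, claim~(\ref{RGRigyNEZki}) is the analogous group-part computation for type~I products: the complement formula shows $(\PLPI{\mathbf X}{\mathbf Z}{\mathbf Y})_\mathbf{gr} = Z \times Y_\mathbf{gr}$, which iterates to $\mathbb Z^{j-1} \lex \mathbb R$. For claim~(\ref{RiAZr}) I would invoke Lemma~\ref{realisLEMMA} and verify inductively for $\mathbf R_{j+1} = \PLPI{\mathbb R}{\mathbb Z}{\mathbf R_j}$ the four defining properties of $\mathbb R$: no extrema (inherited from the outer $\mathbb R$), dense order (each fiber $\{n\} \times (R_j \cup \{\top, \bot\})$ is, by induction, order-isomorphic to a closed real interval, while the non-integer first coordinates are dense among the fibers), a countable dense subset (combine countable dense subsets of the fibers over integer coordinates with rational non-integer first coordinates), and Dedekind completeness.

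The main obstacle is Dedekind completeness in claim~(\ref{RiAZr}). Given a bounded non-empty $S \subseteq R_{j+1}$, set $s = \sup_{\mathbb R}\{x : (x, y) \in S\}$. If $s \notin \mathbb Z$, or $s \in \mathbb Z$ but no element of $S$ has first coordinate $s$, the supremum in $\mathbf R_{j+1}$ is $(s, \bot)$; if $s \in \mathbb Z$ is attained by $S$, the supremum is $(s, w)$, where $w$ is the sup of $\{y : (s, y) \in S\}$ inside $R_j \cup \{\top, \bot\}$, which exists by the induction hypothesis (with $\top$ supplying the top when needed). Verifying least-upper-bound-ness across fibers is routine from the lex order but does require an explicit case check. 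The same template, with $\mathbb Z$ in place of $\mathbb R$ as the outer order, handles Dedekind completeness in claim~(\ref{ZjDISCemb}).
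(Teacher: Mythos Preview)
Your proposal is correct and follows essentially the same approach as the paper: induction on $j$ using the recursive description of the universes, invoking Proposition~\ref{AB-C=A-BC} for claim~(\ref{noddogel}), and verifying the four conditions of Lemma~\ref{realisLEMMA} for claim~(\ref{RiAZr}). The paper's own proof is much terser (mostly ``easy induction''), and the one small difference is that the paper names $Q_j$ directly as the countable dense subset of $R_j$, whereas you describe an equivalent ad hoc construction.
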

\begin{proof}
By using that $Z_{j+1}$ (the universe of $\mathbf Z_{j+1}$) is equal to $\mathbb Z\times( Z_j \cup\{\top\})$, (\ref{charZj}) follow by an easy induction on $j$.
(\ref{ZGRalaphalmaza})-(\ref{noddogel}) readily follow from (\ref{charZj}), 
(\ref{noddogel}) follows from Proposition~\ref{AB-C=A-BC}, too.
By using that $R_{j+1}$ (the universe of $\mathbf R_{j+1}$) is $(\mathbb Z\times( R_j \cup\{\top\}))\cup(\mathbb R\times \{\bot\})$, (\ref{RGRigyNEZki}) follows by an easy induction on $j$.
An easy induction on $j$ also shows that 
$\mathbf R_j$ has no least neither greatest element, 
$\mathbf R_j$ is densely ordered,  
$Q_j$ is a countable dense subset of $\mathbf R_j$, and
$\mathbf R_j$ is Dedekind complete.
These, by Lemma~\ref{realisLEMMA}, conclude the proof of (\ref{RiAZr}).
\qed\end{proof}

\begin{proposition}\label{sdhajkdhasgGGG}
Let $\mathbf A$, $\mathbf B$ and $\mathbf L$ be odd FL$_e$-chains, $\mathbf H\leq\mathbf A_\mathbf{gr}$.
If 
\begin{itemize}
\item
$\mathbf A$ and $\mathbf B$ are order isomorphic to $\mathbb R$,
\item
$H$ and $L_{gr}$ are countable, 
\item
$L$ is Dedekind complete, has a countable dense subset, and has no least neither greatest element, 
\item
$L_{gr}$ is discretely embedded into $L$,
and 
\item
there exists no gap in $L$ formed by two elements of $L\setminus L_{gr}$
\end{itemize}
then
$$\mathbf D=\PLPII{(\PLPI{\mathbf A}{\mathbf H}{\mathbf L})}{\mathbf B}$$
is well-defined and order isomorphic to $\mathbb{R}$.
\end{proposition}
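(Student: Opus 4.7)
The plan is to invoke Lemma~\ref{realisLEMMA} and verify its four order-theoretic criteria for $(D,\leq)$, after first checking well-definedness. For the latter, the inner algebra $\PLPI{\mathbf A}{\mathbf H}{\mathbf L}$ exists by the hypothesis $\mathbf H\leq\mathbf A_\mathbf{gr}$. Its group part is exactly $H\times L_{gr}$, so the outer type II construction is licensed precisely when $H\times L_{gr}$ is discretely embedded into $\PLPI{\mathbf A}{\mathbf H}{\mathbf L}$. Since $L$ has neither a least nor a greatest element and $L_{gr}$ is discretely embedded in $L$, for $(h,z)\in H\times L_{gr}$ the unique cover and unique covered element in the lexicographic order are $(h,z_\uparrow)$ and $(h,z_\downarrow)$, both in $H\times L_{gr}$; this delivers the discrete embedding.

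Next I would verify the four properties. For the absence of extrema: whatever $d\in D$ is, one may push past its first coordinate using the fact that $A$ has no extrema (since $A\cong\mathbb R$) and observe that $(a',\bot,\top)\in D$ for every $a'\in A$. For density of the order: a case split according to whether $d_1<d_2$ agree on the first coordinate. If not, density of $A$ and the universal presence of triples $(a,\bot,\top)$ produce an intermediate element. If yes, then (as an isolated point in $D$ when $a\in A\setminus H$) one must have $a\in H$, and a finer split on the second coordinates uses both the density of $B$ (inserted at each point of $H\times L_{gr}$) and the hypothesis that $L\setminus L_{gr}$ admits no gap of its own to provide an intermediate element of $L$. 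For the countable dense subset: piece together a countable dense subset $A^\ast$ of $A$, a countable dense subset $L^\ast$ of $L$, the countable sets $H$ and $L_{gr}$, and a countable dense subset $B^\ast$ of $B$; the set $\{(a,\bot,\top):a\in A^\ast\}\cup\{(h,y,\top):h\in H,\ y\in L^\ast\cup L_{gr}\}\cup\{(h,z,b):h\in H,\ z\in L_{gr},\ b\in B^\ast\}$ is countable, and the same case analysis used for density shows it is dense.

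Dedekind completeness, which I expect to be the main obstacle, I would treat by a nested sup computation. Given nonempty bounded $S\subseteq D$, set $a^\ast:=\sup_A\{a:(a,y,c)\in S\}$, which exists by Dedekind completeness of $A$. If $a^\ast$ is not attained as a first coordinate of $S$, or if it is but $a^\ast\in A\setminus H$, then $\sup S=(a^\ast,\bot,\top)$. Otherwise $a^\ast\in H$, and I set $y^\ast:=\sup\{y:(a^\ast,y,c)\in S\}$ in the Dedekind-complete chain $L\cup\{\top,\bot\}$. The crucial observation in this step is that if $y^\ast\in L_{gr}$ then $y^\ast$ must be attained in that projection, since otherwise $y^\ast_\downarrow$ would be a smaller upper bound (no element of $L$ lies strictly between $y^\ast_\downarrow$ and $y^\ast$), contradicting the choice of $y^\ast$ as a least upper bound. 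Depending on whether $y^\ast\in L_{gr}$, $y^\ast\in L\setminus L_{gr}$, or $y^\ast\in\{\top,\bot\}$, the supremum is either read off directly as $(a^\ast,y^\ast,\top)$, or a third nested sup is formed over $B\cup\{\top\}$ using Dedekind completeness of $B$. Handling each case and verifying that the resulting candidate lies in $D$ (in particular that the forbidden combinations $y\in L\setminus L_{gr}$ with $c\neq\top$ never arise) is the essential bookkeeping, but the structural constraints forced by the discrete embedding of $L_{gr}$ make each case unambiguous, so Lemma~\ref{realisLEMMA} then identifies $\mathbf D$ with $\mathbb R$.
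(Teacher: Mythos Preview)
Your proposal is correct and follows essentially the same route as the paper: verify well-definedness via the discrete embedding of $H\times L_{gr}$, then check the four hypotheses of Lemma~\ref{realisLEMMA} by a coordinatewise case analysis for density, a nested-supremum argument for Dedekind completeness (including the key observation that a non-attained second-coordinate supremum cannot lie in $L_{gr}$), and an explicit countable dense subset assembled from countable pieces. The paper's argument is organized the same way, with the same case splits and the same use of the discrete embedding of $L_{gr}$ at the pivotal step.
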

\begin{proof}
Denote $\mathbf C=\PLPI{\mathbf A}{\mathbf H}{\mathbf L}$.
Note that since $L$ is unbounded and $L_{gr}$ is discretely embedded into $L$, it follows that 
$C_{gr}= H\times L_{gr}$ is discretely embedded into $C=(H\times (L\cup\top_L))\cup(A\times\{\bot_L\})$.
Thus $\mathbf D$ is well-defined.
By definition, 
$$
D
=
(C_{gr}\times B)\cup(C\times \{\top_{B}\})
=
$$
$$
(H\times L_{gr}\times B)\cup(C\times \{\top_{B}\})
=
$$
$$
[H\times L_{gr}\times B]
\cup
[H\times (L\cup\{\top_L\})\times \{\top_{B}\}]
\cup
[A\times\{\bot_L\}\times \{\top_{B}\}]
=
$$
$$
[H\times L_{gr}\times B]
\cup
[H\times L\times \{\top_{B}\}]
\cup
[H\times \{\top_L\}\times \{\top_{B}\}]
\cup
[A\times\{\bot_L\}\times \{\top_{B}\}]
=
$$
$$
[H\times L_{gr}\times (B\cup \{\top_{B}\})]
\cup
[H\times (L\setminus L_{gr})\times \{\top_{B}\}]
\cup
[H\times \{\top_L\}\times \{\top_{B}\}]
\cup
[A\times\{\bot_L\}\times \{\top_{B}\}]
.
$$
$D$ has no least neither greatest element, since partial lexicographic products clearly inherit the boundedness of their first component, and
$A$ has neither least nor greatest element.
%- Since $A$ has no least neither greatest element, it follows that $C=(H\times (L\cup\top_L))\cup(A\times\{\bot_L\})$ has no least neither greatest element, either. In turn, it follows that $D=(C_{gr}\times B)\cup(C\times \{\top_{B}\})$ has no least neither greatest element, either.

\medskip\noindent
We prove that $D$ is densely ordered.
Let $x=(x_1,x_2,x_3),(y_1,y_2,y_3)\in D$, $x<y$. 
\begin{itemize}
\item[$i.$]
Assume $x_1<y_1$.\\
Then there exists $z_1\in A$ such that $x_1<z_1<y_1$ since $\mathbf A$ is order isomorphic to $\mathbb{R}$, and hence 
$\mathbf A$ is densely ordered, hence $(z_1,\bot_L,\top_{B})$ is strictly in between $x$ and $y$.
\item[$ii.$]
Next, assume $x_1=y_1$ and $x_2<y_2$.\\
Since $x_2,y_2\in L\cup\{\bot_{L},\top_{L}\}$ and $x_2<y_2$ excludes $x_2=\top_{L}$, it follows that $x_2\in L\cup\{\bot_{L}\}$.
\begin{itemize}
\item
Assume $x_2=\bot_{L}$.\\
Then $y_2\in L\cup\{\top_{L}\}$ follows from $x_2<y_2$, and we can choose $c\in L$ such that $c<y_2$ since $L$ is nonempty and has no least element; resulting in $(x_1,c,\top_{B})$ being strictly in between $x$ and $y$.

\item
Assume $x_2\in L_{gr}$.\\
Then $(x_2)_\uparrow\in L_{gr}$ follows\footnote{$_\uparrow$ is computed in $L$.}  
since 
$L_{gr}$ is discretely embedded into $L$.
In addition, $x_2<(x_2)_\uparrow\leq y_2$ holds.
%--- 
If $(x_2)_\uparrow<y_2$ then the element $(x_1,(x_2)_\uparrow,\top_{B})\in D$ is strictly in between $x$ and $y$, whereas 
%--- 
if $(x_2)_\uparrow=y_2$ then $y_2\in L_{gr}$, too, and hence $y_3\in B\cup \{\top_{B}\}$ follows. 
Since $\mathbf B$ has no least element, we can choose $c\in B$ such that $c<y_3$. Hence $(x_1,(x_2)_\uparrow,c)$ is strictly in between $x$ and $y$.
\item
Assume $x_2\in L\setminus L_{gr}$.\\
Then $x_2<y_2$ excludes $y_2=\bot_{L}$, hence
$y_2\in L\cup \top_{L}$. 
\begin{itemize}
\item
 If $y_2=\top_{L}$ then, since $L$ has no greatest element, there is an element $c\in L$ such that $x_2<c <y_2$.
 \item
 If $y_2\in L_{gr}$ then since $L_{gr}$ is discretely embedded into $L$, it follows that
$L\setminus L_{gr}\ni x_2\not=(y_2)_\downarrow\in L_{gr}$, 
hence letting $c=(y_2)_\downarrow$, $x_2<c<y_2$ holds.
\item
 If $y_2\in L\setminus L_{gr}$ then there is an element $c\in L$ such that $x_2<c<y_2$ since there exists no gap in $L$ formed by two elements of $L\setminus L_{gr}$.
\end{itemize}
In all the three previous cases, the element $(x_1,c,\top_{B})$ is strictly in between $x$ and $y$, and we are done.
\end{itemize}
\item[$iii.$]
Finally, assume $x_1=y_1$, $x_2=y_2$ and $x_3<y_3$.\\
It follows that $x_1\in H$, $x_2\in L_{gr}$, and $x_3\in B\cup \{\top_{B}\}$.
Since $\mathbf B$ is order isomorphic to $\mathbb R$ (and hence 
$\mathbf B$ is densely ordered and has no greatest element), 
it follows that $B\cup\{\top_{B} \}$ is densely ordered, too, hence there is an element $c\in L$ such that $x_3<c<y_3$, and thus the element $(x_1,x_2,c)$ is strictly in between $x$ and $y$.
\end{itemize}

\medskip\noindent
Next we prove that $D$ is Dedekind complete.
Take any nonempty subset of $V\subseteq D$ which has an upper bound $(b_1,b_2,b_3)\in D$.
Then
$$
V_1=\{v_1 \ | \ (v_1,v_2,v_3)\in D\}
$$
is a subset of $A$, it is nonempty and bounded from above by $b_1$. Since $A$ is order isomorphic to $\mathbb{R}$, and thus
$A$ is Dedekind complete, there exists the supremum $m_1$ of $V_1$.
\begin{itemize}
\item[$i.$]
If $m_1\notin V_1$ then $(m_1,\bot_{L},\top_{B})\in D$ is the supremum of $V$. 
\item[$ii.$]
If $m_1\in V_1$ and $m_1\in A\setminus H$ then 
$(m_1,\bot_L,\top_{B})\in D$ is the supremum of $V$.
\item[$iii.$]
Finally, assume that $m_1\in V_1$ and $m_1\in H$.
Then
$$
V_2=\{v_2 \ | \ (m_1,v_2,v_3)\in V\}
$$
is a subset of $L \cup\{\top_L\}$, 
it is nonempty and bounded from above by $b_2$.
Since 
$L$ is Dedekind complete, 
so does $L\cup\{\top_{L}\}$, hence 
there exists the supremum $m_2$ of $V_2$ in $L \cup\{\top_L\}$.
\begin{itemize}
\item
If $m_2\in V_2$ and $m_2\in (L\cup{\top_L})\setminus L_{gr}$ then $(m_1,m_2,\top_{B})\in D$ is the supremum of $V$.
\item
If $m_2\in V_2$ and $m_2\in L_{gr}$ then
$$
V_3=\{v_3 \ | \ (m_1,m_2,v_3)\in V\}
$$
is a subset of $B \cup\{\top_{B}\}$, it is nonempty and bounded from above by $b_3$.
Since $B$ is order isomorphic to $\mathbb R$, and hence 
$B$ is Dedekind complete, there exists the supremum $m_3$ of $V_3$ in $B \cup\{\top_{B}\}$. Then $(m_1,m_2,m_3)\in D$, and  it is the supremum of $V$.
\item
If $m_2\notin V_2$ then it follows that $m_2$ cannot be an element of $L_{gr}$, since $L_{gr}$ is discretely embedded into $L$. 
Indeed, if $m_2\in L_{gr}$ were an upper bound of $V_2$, and $m_2\notin V_2$ then $(m_2)_\downarrow(<m_2)$ would be an upper bound of $V_2$, too, and hence $m_2$ cannot be the smallest upper bound.
Hence, $m_2\in (L\cup\{\top_L\})\setminus L_{gr}$, and thus $(m_1,m_2,\top_{B})\in D$ is the supremum of $V$.
\end{itemize}
\end{itemize}

\medskip\noindent
Finally, let 
$D$ and $D_4$ be countable dense subsets of $A$ and $B$, respectively.
Then,
since $H$ and $L_{gr}$ are countable, and $L$ has a countable dense subset $D_L$,
$$
[H\times L_{gr}\times D_4]
\cup
[H\times (D_L\cup\{\top_L\})\times \{\top_{B}\}]
\cup
[D\times\{\bot_L\}\times \{\top_{B}\}]
$$
is clearly a countable, dense subset of $D$.
\qed
\end{proof}

\begin{proposition}\label{topJOLESZ}
Let $\mathbf A$ and $\mathbf B$ be odd FL$_e$-chains which are order isomorphic to $\mathbb R$, $\mathbf H\leq\mathbf A_\mathbf{gr}$, $\mathbf H$ is countable, $\mathbf L=\mathbf Z_j$ for some $j\in\mathbb N$.
Then 
$\mathbf D=\PLPII{(\PLPI{\mathbf A}{\mathbf H}{\mathbf L})}{\mathbf B}$ is well-defined and order isomorphic to $\mathbb{R}$.
\end{proposition}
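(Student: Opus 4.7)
The plan is to reduce Proposition~\ref{topJOLESZ} to Proposition~\ref{sdhajkdhasgGGG} by simply verifying, one by one, that the particular choice $\mathbf L=\mathbf Z_j$ together with the stated hypotheses on $\mathbf A$, $\mathbf B$, $\mathbf H$ satisfies all the bullets of the more general statement. Once the bullets are checked, the conclusion is immediate.

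First I would collect the two easy conditions that are already in the hypothesis: $\mathbf A$ and $\mathbf B$ are order isomorphic to $\mathbb R$, and $\mathbf H\leq\mathbf A_\mathbf{gr}$ is countable. Next I would verify the remaining bullets of Proposition~\ref{sdhajkdhasgGGG} for $L=Z_j$ by pointing to the appropriate clauses of Proposition~\ref{SpeciAlgebrak}. Namely, clause~(\ref{ZjDISCemb}) gives at once that $\mathbf Z_j$ is Dedekind complete with no least and no greatest element, and that $\mathbf Z_j$ is countable; countability of $Z_j$ simultaneously supplies a countable dense subset of $L$ (namely $L$ itself) and the countability of $L_{gr}=(\mathbf Z_j)_\mathbf{gr}$. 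Clause~(\ref{ZjDISCRETELY}) supplies both the discrete-embedding requirement and (in its second half) the no-gap requirement: for any $x\in L\setminus L_{gr}$ and any $y\in L$ with $x\neq y$, some element of $L_{gr}$ lies strictly between $x$ and $y$, which in particular prevents any gap from being formed by two elements of $L\setminus L_{gr}$.

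With all bullets verified, Proposition~\ref{sdhajkdhasgGGG} applies directly to conclude that $\mathbf D=\PLPII{(\PLPI{\mathbf A}{\mathbf H}{\mathbf L})}{\mathbf B}$ is well-defined and order isomorphic to $\mathbb R$. I do not anticipate any real obstacle; the statement is essentially a specialization of Proposition~\ref{sdhajkdhasgGGG} tailored to the algebras $\mathbf Z_j$ whose structural properties have been packaged in Proposition~\ref{SpeciAlgebrak} precisely to feed into this kind of application. The only thing to be slightly careful about is that the ``no gap'' hypothesis is not stated verbatim for $\mathbf Z_j$, but it is an immediate consequence of the second half of clause~(\ref{ZjDISCRETELY}), so a single sentence suffices to deduce it.
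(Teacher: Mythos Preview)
Your proposal is correct and follows exactly the paper's approach: the paper's proof is the single sentence ``Proposition~\ref{SpeciAlgebrak} ensures all the properties of $\mathbf Z_j$ which are required in Proposition~\ref{sdhajkdhasgGGG},'' and your writeup simply unpacks which clause of Proposition~\ref{SpeciAlgebrak} covers which bullet. Your remark that the no-gap condition is not verbatim in Proposition~\ref{SpeciAlgebrak} but follows from the second half of clause~(\ref{ZjDISCRETELY}) is a helpful clarification that the paper leaves implicit.
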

\begin{proof}
Proposition~\ref{SpeciAlgebrak} ensures all the properties of $\mathbf Z_j$ which are required in Proposition~\ref{sdhajkdhasgGGG}.
\qed\end{proof}

\begin{proposition}\label{topbotJOLESZ}
Let $\mathbf A$ and $\mathbf D$ be odd FL$_e$-chains which are order isomorphic to $\mathbb R$, $\mathbf H\leq\mathbf A_\mathbf{gr}$, $\mathbf H$ is countable. %, $\mathbf H$ is either the one-element subalgebra or it is discretely ordered.
Let 
$\mathbf C=\PLPI{\mathbf A}{\mathbf H}{\mathbf D}$.
Then $\mathbf C$ is order isomorphic to $\mathbb{R}$.
\end{proposition}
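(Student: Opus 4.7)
The plan is to apply Lemma~\ref{realisLEMMA} to $(C,\leq)$, so I need to establish its four defining properties: no least or greatest element, dense order, existence of a countable dense subset, and Dedekind completeness. Recall that by Definition~\ref{FoKonstrukcio}, the universe of $\mathbf C$ is
$$
C=(H\times(D\cup\{\top,\bot\}))\cup((A\setminus H)\times\{\bot\}),
$$
equipped with the lexicographic order inherited from $\leq_A$ and $\leq_{D\cup\{\top,\bot\}}$. The overall strategy is analogous to the proof of Proposition~\ref{sdhajkdhasgGGG}, but substantially simpler: since $\mathbf D$ is itself order isomorphic to $\mathbb R$, there is no need to track a discretely embedded group part or exclude ``gap'' configurations in the second coordinate.

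First I would verify that $\mathbf C$ has no extrema: this follows at once from the fact that $A$ has none, together with the lexicographic order, since $(a,\bot)\in C$ for every $a\in A\setminus H$ as well as for every $a\in H$. Next, to prove density, I would pick $x=(x_1,x_2)<y=(y_1,y_2)$ in $C$ and split into the two usual cases. If $x_1<y_1$, use density of $A$ to find $z_1\in A$ with $x_1<z_1<y_1$ and note that $(z_1,\bot)\in C$ is strictly between. If $x_1=y_1$, then necessarily $x_1\in H$ (elements of $A\setminus H$ admit only the second coordinate $\bot$), and we have $x_2<y_2$ inside $D\cup\{\top,\bot\}$; a short case distinction on whether each of $x_2,y_2$ is $\bot$, a member of $D$, or $\top$ produces an interpolating $d\in D$ using the density of $D$ together with the absence of extrema in $D$, and $(x_1,d)\in C$ is strictly between $x$ and $y$.

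For a countable dense subset, let $A_0$ be a countable dense subset of $A$ and $D_0$ a countable dense subset of $D$; then
$$
(A_0\times\{\bot\})\cup(H\times(D_0\cup\{\top\}))
$$
is countable (since $H$ is countable) and, by the density argument above, meets every nonempty open interval of $C$. Finally, to prove Dedekind completeness, take a nonempty $V\subseteq C$ bounded above and let $V_1=\{v_1:(v_1,v_2)\in V\}$. By Dedekind completeness of $A$, $m_1=\sup V_1$ exists in $A$. If $m_1\notin V_1$, or if $m_1\in V_1\cap(A\setminus H)$, then $(m_1,\bot)$ is the supremum of $V$ in $C$. Otherwise $m_1\in H$, so form $V_2=\{v_2:(m_1,v_2)\in V\}\subseteq D\cup\{\top,\bot\}$; this set is nonempty and bounded above, and since $D$ is Dedekind complete the augmented chain $D\cup\{\top,\bot\}$ is Dedekind complete as well, so $m_2=\sup V_2$ exists there. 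In all three subcases $m_2\in\{\bot\}$, $m_2\in D$, or $m_2=\top$, the element $(m_1,m_2)$ lies in $C$ and is the supremum of $V$.

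I do not anticipate a genuine obstacle: the argument is a streamlined version of Proposition~\ref{sdhajkdhasgGGG}. The only point requiring mild care is keeping straight which pairs actually belong to $C$ (only those with first coordinate in $H$ may have second coordinate different from $\bot$), which enters both the density case $x_1=y_1$ and the Dedekind completeness sub-case where $m_1\in A\setminus H$.
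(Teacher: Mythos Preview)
Your proposal is correct and follows essentially the same route as the paper's own proof: verify the four conditions of Lemma~\ref{realisLEMMA} directly, using the lexicographic description of $C$, the density and Dedekind completeness of $A$ and of $D\cup\{\top,\bot\}$, and the countability of $H$. The case splits you outline (for density: $x_1<y_1$ versus $x_1=y_1\in H$; for completeness: $m_1\notin V_1$, $m_1\in V_1\cap(A\setminus H)$, $m_1\in V_1\cap H$) match the paper's argument exactly.
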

\begin{proof}
By Definition~\ref{FoKonstrukcio}, the universe of $\mathbf C$ is
$$
C
=
(H\times(D\cup\{\top\}))\cup \left(A\times \{\bot\}\right)
=
(H\times(D\cup\{\top,\bot\}))\cup^{.} \left((A\setminus H)\times \{\bot\}\right)
.
$$
-
$C$ has no least neither greatest element since $A\times \{\bot\}\subseteq C$ and $A$ has no least neither greatest element.
\\
-
$C$ 
%$C=(H\times(D\cup\{\top\}))\cup \left(A\times \{\bot\}\right)$
%$$C=(H\times(D\cup\{\top,\bot\}))\cup \left((A\setminus H)\times \{\bot\}\right)$$
is densely ordered.
Indeed, let $(p,q)<(r,s)$.
If $p<r$ then there exists an element $v\in ]p,r[$ since $A$ is densely ordered, and thus $(v,\bot)\in C$ and $(p,q)<(v,\bot)<(r,s)$ holds.
If and $p=r$ then $p\in H$ follows and hence there is an element $v\in ]q,s[$ such that $(p,q)<(p,v)<(p,s)$ holds since $D\cup\{\top,\bot\}$ is densely ordered.
\\
-
Let $Q$ and $Q_3$ be countable dense subsets of $A$ and $D$, respectively.
It follows that $Q_2=(H\times(Q_3\cup\{\top\}))\cup \left(Q\times \{\bot\}\right)$ is a dense subset of $C=(H\times(D\cup\{\top\}))\cup \left(A\times \{\bot\}\right)$. Moreover, $Q_2$ is countable, since so is $H$.
\\
-
Finally we prove that $C$ is Dedekind complete.
Take any nonempty subset of $V\subseteq C$ which has an upper bound $(b_1,b_2)\in C$.
Let $V_1=\{v_1 \ | \ (v_1,v_2)\in C\}$.
Then $V_1\subseteq A$ is nonempty and bounded from above by $b_1$. Since $A$ is order isomorphic to $\mathbb{R}$, and $\mathbb{R}$ is Dedekind complete, there exists the supremum $m_1$ of $V_1$.
If $m_1\notin V_1$ then $(m_1,\bot)\in C$ is the supremum of $V$.
If $m_1\in V_1$ and $m_1\in A\setminus H$ then 
$(m_1,\bot)\in C$ is the supremum of $V$.
Finally, if $m_1\in V_1$ and $m_1\in H$ then $V_2:=\{v_2 \ | \ (m_1,v_2)\in V\}\subseteq D\cup\{\top,\bot\}$ is nonempty and bounded from above by $b_2$.
Since $D$ is Dedekind complete, so does $D\cup\{\top,\bot\}$, and hence $V_2$ has a supremum $m_2$ in $D\cup\{\top,\bot\}$, yielding that $(m_1,m_2)$ is the supremum of $V$.
\qed\end{proof}

\begin{theorem}\label{BeAgyazatam}
The logic $\mathbf{IUL}^{fp}$ enjoys finite strong standard completeness.
\end{theorem}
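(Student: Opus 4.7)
The plan is to reduce the theorem to an embedding statement and then execute the embedding using the structural machinery of the preceding propositions. By the algebraic completeness of $\mathbf{IUL}^{fp}$ with respect to bounded odd FL$_e$-chains, it suffices to show that every finitely generated bounded odd FL$_e$-chain $\mathbf{Y}$ embeds into a standard $\mathbf{IUL}^{fp}$-algebra --- one whose lattice reduct is $[0,1]$. Proposition~\ref{fingen_finIdemp} guarantees that $\mathbf{Y}$ has only finitely many positive idempotents, and then Propositions~\ref{typeIIisVAN} and~\ref{aGROUPREPRisFINITELYgeneratedTAGOKBOLall} embed $\mathbf{Y}$ into an odd FL$_e$-chain $\mathbf{X}_n$ obtained by iterating type I and type II partial lexicographic products over finitely generated linearly ordered abelian groups $\mathbf{G}_1,\ldots,\mathbf{G}_n$, with $\mathbf{G}_1=\mathbf{1}$ by the remark following Theorem~\ref{Hahn_type}. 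Since a finitely generated linearly ordered abelian group is torsion-free and embeds as an ordered group into $\mathbb{Z}^{k_i}$ under the lexicographic order, each $\mathbf{G}_i$ embeds as an odd FL$_e$-chain into $(\mathbf{Z}_{k_i})_\mathbf{gr}$, and thus into both $\mathbf{Z}_{k_i}$ and $\mathbf{R}_{k_i}$.

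I would then build the embedding target by replacing each $\mathbf{G}_i$ with a carefully chosen enlargement: use $\mathbf{Z}_{k_i}$ whenever the operation immediately following the introduction of $\mathbf{G}_i$ is of type II (so that the discreteness requirement of Definition~\ref{FoKonstrukcio} is preserved, cf.~Proposition~\ref{SpeciAlgebrak}(\ref{ZjDISCRETELY})), and $\mathbf{R}_{k_i}$ otherwise (cf.~Proposition~\ref{SpeciAlgebrak}(\ref{RiAZr})). Lemma~\ref{SubLexiTheo} together with the monotonicity of the partial lexicographic product under subalgebra substitution yields $\mathbf{Y}\leq\mathbf{X}_n\leq\mathbf{X}'_n$, where $\mathbf{X}'_n$ is the enlarged algebra. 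Using Proposition~\ref{AB-C=A-BC} together with Proposition~\ref{SpeciAlgebrak}(\ref{noddogel}) to amalgamate consecutive type II extensions, $\mathbf{X}'_n$ can be assembled as an iterated sequence of blocks of two kinds: \emph{a single type I} or \emph{a type I followed by a type II}. Proposition~\ref{topbotJOLESZ} handles the first kind of block and Proposition~\ref{topJOLESZ} handles the second; an induction on the number of blocks then shows that the lattice reduct of $\mathbf{X}'_n$ is order-isomorphic to $\mathbb{R}$.

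Finally, to realise the embedding into a bounded algebra on $[0,1]$, I would adjoin a new top and bottom to $\mathbf{X}'_n$ via one further type I extension (which preserves the embedding of $\mathbf{Y}$ and produces a bounded odd FL$_e$-chain whose lattice reduct is order-isomorphic to $[0,1]$), and then transport the algebraic structure along an order isomorphism onto $[0,1]$ to obtain the desired standard $\mathbf{IUL}^{fp}$-algebra. I expect the main obstacle to be the bookkeeping required at the iterative step: at each block one must verify the hypotheses of Propositions~\ref{topbotJOLESZ} and~\ref{topJOLESZ}, namely countability of the subalgebra $\mathbf{H}$ inherited from the preceding finitely generated layer, discreteness of the group part whenever a type II step is upcoming, and the ``no gap formed by two elements outside the group part'' condition underlying Proposition~\ref{sdhajkdhasgGGG}. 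All of these are in principle delivered by Proposition~\ref{SpeciAlgebrak} together with the specific $\mathbf{Z}_{k_i}/\mathbf{R}_{k_i}$ insertions, but organising the choices cleanly so that the induction propagates at every level is the core technical work.
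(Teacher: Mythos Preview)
Your proposal is correct and follows essentially the same route as the paper: reduce to an embedding problem, invoke Propositions~\ref{fingen_finIdemp}, \ref{typeIIisVAN} and~\ref{aGROUPREPRisFINITELYgeneratedTAGOKBOLall} to obtain a type I--II representation with finitely generated component groups, enlarge each $\mathbf G_i$ to $\mathbf Z_{k_i}$ or $\mathbf R_{k_i}$ according to the type of the next step, collapse consecutive type II steps via Proposition~\ref{AB-C=A-BC}, and run an induction using Propositions~\ref{topbotJOLESZ} and~\ref{topJOLESZ} to get order type $\mathbb R$. The only difference is cosmetic: the paper first strips the top and bottom and works with the resulting unbounded $\mathbf Y$ (reinstating bounds at the end), whereas you keep the bounded algebra and rely on $\mathbf G_1=\mathbf 1$; your variant in fact forces $\iota_2=III$, which slightly simplifies the base case that the paper handles separately when $\iota_2=IV$.
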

\begin{proof}
$\rm{IUL}^{fp}$-chains are non-trivial bounded odd FL$_e$-chains.
Knowing the fact that $\rm{IUL}^{fp}$-algebras constitute an algebraic semantics of $\mathbf{IUL}^{fp}$, we shall prove that any non-trivial finitely 
generated bounded odd FL$_e$-chain %(bounded and not bounded alike) 
embeds into a odd FL$_e$-chain over the real unit interval $[0,1]$ such that its top element is mapped into $1$ and its bottom element is mapped into $0$.
Thus we prove that any $\mathbf{IUL}^{fp}$ formula which is falsified in a linearly ordered model of finitely many $\mathbf{IUL}^{fp}$ formulas (which is always a finitely generated $\rm{IUL}^{fp}$-chain)
can also be falsified in a standard odd FL$_e$-algebra, that is, in one over $[0,1]$.

To this end, %for convenience reasons, we shall work with unbounded algebras as follows.
let $\mathbf Y_1$ be a non-trivial finitely generated bounded odd FL$_e$-chain (thus it has at least three elements), and let $\mathbf Y$ be the subalgebra of $\mathbf Y_1$ over its universe deprived its top and bottom elements.
Then $\mathbf Y$ is a finitely generated (not necessarily bounded) odd FL$_e$-chain.

Our plan is to embed $\mathbf Y$, guided by its group representation, into a odd FL$_e$-chain $\mathbf X_n^\ast$ over a universe which is order isomorphic to $\mathbb{R}$. 
Then, using the order isomorphism together with an order isomorphism between $\mathbb{R}$ and $]0,1[$, we can carry over the structure of $\mathbf X_n^\ast$ into $]0,1[$, and finally we can add a top and a bottom element (as in item A in Definition~\ref{FoKonstrukcio}) 
to get a odd FL$_e$-chain over $[0,1]$, in which $\mathbf Y_1$ embeds. \footnote{We work with $\mathbf Y$ rather than with $\mathbf Y_1$ because we do not want to bother with the embedding of the top and bottom elements throughout each step of the proof.}

By Proposition~\ref{fingen_finIdemp}, $\mathbf Y$ has finitely many (say, $n\geq 1$) positive idempotent elements, therefore it has a type III-IV group representation by Theorem~\ref{Hahn_type}.
By Proposition~\ref{typeIIisVAN}, there exists an odd FL$_e$-chain $\mathbf X_n$ such that $\mathbf X_n$ has a type I-II group representation and
$$\mathbf Y\leq\mathbf X_n.$$
By Proposition~\ref{aGROUPREPRisFINITELYgeneratedTAGOKBOLall}, all the groups 
$\mathbf G_i$ $i\in\{1,\ldots,n\}$ in the group representation of $\mathbf Y$ (see (\ref{EzazY})) are finitely generated.
Hence, 
%
%
\begin{comment}
It has two consequences:
(i)
Since the group representation of $\mathbf Y$ and of $\mathbf X_n$ share the same components (see (\ref{EzazY}) and (\ref{ezLESZazX})), it follows that all the groups in the group representation of $\mathbf X_n$ are finitely generated, too.
Using this, an easy (upward) induction shows that for $i\in\{1,\ldots,n\}$,
$$
\mbox{
$\mathbf X_i$ is finitely generated.%, hence so is $\mathbf X$.
}
$$

(ii)
\end{comment}
%
%
by the fundamental theorem of finitely generated abelian groups, each such group is isomorphic to a direct sum of cyclic groups, and since totally ordered groups are torsion fee, therefore totally ordered finitely generated abelian groups are isomorphic to direct sums of finitely many $\mathbb Z$'s.
So, for $i\in\{1,\ldots,n\}$, 
$
\mathbf G_i
\simeq
\bigoplus_{i=1}^{k_i}
\mathbb Z
$
holds for some $k_i\in\mathbb N$, 
where for $k_i=0$, $\mathbf G_i$ is meant to be the one-element group.
So $\mathbf G_i$ is countable, and unless $\mathbf G_i$ is the one-element group, $\mathbf G_i$ is discretely ordered.
The isomorphism naturally extends to an isomorphism between odd FL$_e$-chains.
Referring to claims~(\ref{ZGRalaphalmaza}) and (\ref{RGRigyNEZki}) in Proposition~\ref{SpeciAlgebrak}, for $i\in\{1,\ldots, n\}$, qua odd FL$_e$-chains,
\begin{equation}\label{follemehetunk}
\mathbf{G}_i\simeq \bigoplus_{i=1}^{k_i}\mathbb Z=(\mathbf Z_{k_i})_\mathbf{gr}%\leq(\mathbf Q_{k_i})_\mathbf{gr}
\leq(\mathbf R_{k_i})_\mathbf{gr}\leq\mathbf R_{k_i}.
\end{equation}

\medskip\noindent
Now, we are ready to define the embedding of 
$$
\mbox{
$\mathbf X_n$ into $\mathbf X_n^\ast$, where $\mathbf X_n^\ast$ is order isomorphic to $\mathbb R$,
}
$$
 as follows:
For $i\in\{1,\ldots,n\}$ let
\begin{equation}\label{slkalajksdhsjkjhkds}
\mathbf X_i^\ast=
\left\{
\begin{array}{ll}
\mathbf R_{k_1} & \mbox{if $i=1$ and either $\iota_2=III$ or $i=n$}\\
\mathbf Z_{k_1} & \mbox{if $i=1$, $\iota_2=IV$}\\
\PLPI{\mathbf X_{i-1}^\ast}{\mathbf Z_{i-1}}{\mathbf R_{k_i}}
& \mbox{if $i\in\{2,\ldots,n\}$, $\iota_i=III$, and either $\iota_{i+1}=III$ or $i=n$}\\
\PLPI{\mathbf X_{i-1}^\ast}{\mathbf Z_{i-1}}{\mathbf Z_{k_i}} 
& \mbox{if $i\in\{2,\ldots,n-1\}$, $\iota_i=III$, $\iota_{i+1}=IV$}\\
\PLPII{\mathbf X_{i-1}^\ast}{\mathbf Z_{k_i}}
& \mbox{if $i\in\{2,\ldots,n-1\}$, $\iota_i=IV$, $\iota_{i+1}=IV$}\\
\PLPII{\mathbf X_{i-1}^\ast}{\mathbf R_{k_i}}
& \mbox{if $i\in\{2,\ldots,n\}$, $\iota_i=IV$ and either $\iota_{i+1}=III$ or $i=n$}\\
%\mathbf X_n^\ast & \mbox{if $i=n+1$, $\iota_n=III$}\\
%{\mathbf X_n^\ast}_{\mathbf\Gamma((\mathbf X_{i-1}^\ast)_\mathbf{gr}, {\mathbf R_{k_i}}^{\top\bot})}& \mbox{if $i=n+1$, $\iota_n=IV$}\\
\end{array}
\right. .
\end{equation}
We claim that for $i\in\{1,\ldots,n\}$, $\mathbf X_i$ embeds into $\mathbf X_i^\ast$, 
and that 
$\mathbf X_n^\ast$ is order isomorphic to $\mathbb R$;
thus concluding the proof of the theorem. %strong standard completeness of $\mathbf{IUL}^{fp}$.
\medskip
\\
(i) To see that $\mathbf X_i^\ast$ is well-defined, we need to prove the following two items.
\begin{itemize}
\item for $i\in\{2,\ldots,n\}$, $Z_{i-1}\subseteq (X_{i-1}^\ast)_{gr}$ holds.
Indeed,  
$Z_{1}\subseteq G_1 \subseteq (X_1^\ast)_{gr}$ holds by Proposition~\ref{typeIIisVAN}, and (\ref{slkalajksdhsjkjhkds}) and  (\ref{follemehetunk}).
By Proposition~\ref{typeIIisVAN} it holds true, for $3\leq i\leq n$, that $Z_{i-1}\subseteq V_{i-2}\lex G_{i-1}\subseteq Z_{i-2}\lex G_{i-1}$ which is a subset of $(X_{i-1}^\ast)_{gr}$ by (\ref{follemehetunk}) and (\ref{slkalajksdhsjkjhkds}).

\item If $i\in\{2,\ldots,n\}$, $\iota_i=IV$ then $(X_{i-1}^\ast)_{gr}$ is discretely embedded into $X_{i-1}^\ast$.
Indeed, if $\iota_2=IV$ then $\mathbf X_1^\ast=\mathbf Z_{k_1}$, $(Z_{k_1})_{gr}\simeq G_i$ holds by (\ref{follemehetunk}), and $G_i$ is discretely embedded into $Z_{k_1}$, as shown in claim~(\ref{ZjDISCRETELY}) of Proposition~\ref{SpeciAlgebrak}.
If, for $3\leq i\leq n$, $\iota_i=IV$ then by (\ref{slkalajksdhsjkjhkds}),
$\mathbf X_{i-1}^\ast$ is equal to either
$\PLPI{\mathbf X_{i-1}^\ast}{\mathbf Z_{i-1}}{\mathbf Z_{k_i}} $
or
$\PLPII{\mathbf X_{i-1}^\ast}{\mathbf Z_{k_i}}$. Hence $(X_{i-1}^\ast)_{gr}$,
which is equal to either 
$Z_{i-1}\lex { (Z_{k_i})_{gr}}$
or
$(X_{i-1}^\ast)_{gr}\lex { (Z_{k_i})_{gr}}$
is 
discretely embedded into the universe of either
$\PLPI{\mathbf X_{i-1}^\ast}{\mathbf Z_{i-1}}{\mathbf Z_{k_i}} $
or
$\PLPII{\mathbf X_{i-1}^\ast}{\mathbf Z_{k_i}}$, respectively,
since 
$(Z_{k_i})_{gr}$ is discretely embedded into $Z_{k_i}$.
\end{itemize}

\noindent
(ii) We prove that for $i\in\{1,\ldots,n\}$, $\mathbf X_i$ embeds into $\mathbf X_i^\ast$.
It holds for $i=1$ by (\ref{follemehetunk}).
Assume that for $i\in\{2,\ldots,n\}$, $\mathbf X_{i-1}$ embeds into $\mathbf X_{i-1}^\ast$.
If $\mathbf X_i^\ast$ is equal to either
$\PLPI{\mathbf X_{i-1}^\ast}{\mathbf Z_{i-1}}{\mathbf Z_{k_i}} $
or 
$\PLPI{\mathbf X_{i-1}^\ast}{\mathbf Z_{i-1}}{\mathbf R_{k_i}}$
(see the third and fourth rows of (\ref{slkalajksdhsjkjhkds}))
then 
$\mathbf X_i=\PLPII{\mathbf X_{i-1}}{\mathbf G_i}$
clearly embeds into $\mathbf X_i^\ast$.
If $\mathbf X_i^\ast$ is equal to %either $\PLPII{\mathbf X_{i-1}^\ast}{\mathbf Z_{k_i}}$ or 
$\PLPII{\mathbf X_{i-1}^\ast}{\mathbf R_{k_i}}$
(see the last row of (\ref{slkalajksdhsjkjhkds}))
then %in order to confirm that 
$\mathbf X_i=\PLPI{\mathbf X_{i-1}}{\mathbf Z_{i-1}}{\mathbf G_i}$
embeds into $\mathbf X_i^\ast$ 
%it suffices to show that $Z_{i-1}$ is a subset of $(X_{i-1}^\ast)_{gr}$; it holds by (7i).
by (\ref{follemehetunk}).

\medskip
\noindent
(iii) We prove that for $i\in\{2,\ldots,n\}$, $(X_{i-1}^\ast)_{gr}$ is countable if $\iota_i=IV$.
\\
Note that because of Proposition~\ref{AB-C=A-BC}, the fifth row of (\ref{slkalajksdhsjkjhkds}), and claim~(\ref{noddogel}) in Proposition~\ref{SpeciAlgebrak}, we may safely assume that there are no two consecutive type II extensions in (\ref{slkalajksdhsjkjhkds}),
\begin{equation}\label{NoTwoType2s}
\mbox{so the fifth row of (\ref{slkalajksdhsjkjhkds}) never applies.}
\end{equation}
Just like in point (i) above, $(X_1^\ast)_{gr}=G_1$ if $\iota_2=IV$ and hence it is countable since $\mathbf G_1$ is finitely generated, and if $3\leq i\leq n$ and $\iota_i=IV$ then
$(X_{i-1}^\ast)_{gr}$ is either 
$Z_{i-1}\lex { (Z_{k_i})_{gr}}=Z_{i-1}\lex G_i$ (which is countable, since $Z_{i-1}$ and $G_i$ are finitely generated)
or
$(X_{i-1}^\ast)_{gr}\lex { (Z_{k_i})_{gr}}=(X_{i-1}^\ast)_{gr}\lex G_i$.
In the latter case we proceed as follows: 
By (\ref{NoTwoType2s}), $\mathbf X_{i-1}^\ast$ is the result of a type I extension and is equal to
$\PLPI{\mathbf X_{i-2}^\ast}{\mathbf Z_{i-2}}{\mathbf Z_{k_{i-1}}}$
or it is equal to $\mathbf Z_{k_1}$.
Hence, $(X_{i-1}^\ast)_{gr}$ is either $Z_{i-2}\lex Z_{k_{i-1}}$ or $G_1$.
In both cases $(X_{i-1}^\ast)_{gr}$ is countable, since $\mathbf G_i$, $\mathbf Z_{i-2}$ are finitely generated, thus countable, and by claim~(\ref{ZjDISCemb}) in Proposition~\ref{SpeciAlgebrak}, $Z_{k_{i-1}}$ is countable.

\medskip
\noindent
(iv) Finally, we prove that for $m\in M=\{ i\in \{1,\ldots,n-1\} \ | \ \iota_{i+1}=III \}\cup\{n\}$,
$\mathbf X_m^\ast$ is order-isomorphic to $\mathbf R$.

Indeed, it is clear that for $i\in\{1,\ldots,n\}$, $\mathbf X_i^\ast$ has no least neither greatest element since neither $\mathbf R_{k_1}$ nor $\mathbf Z_{k_1}$ has least or greatest element, and this property is clearly inherited via type I and type II extensions.

Let $m$ be the least element of $M\not=\emptyset$.
Because of (\ref{NoTwoType2s}), either $m=1$ or $m=2$ holds.
If $m=1$ then $\iota_2=III$ and hence $\mathbf X_1^\ast=\mathbf R_{k_1}$, so the statement follows from claim~(\ref{RiAZr}) in Proposition~\ref{SpeciAlgebrak}.
If $m=2$ then $\iota_3=III$ and $\iota_2=IV$, hence
$\mathbf X_1^\ast=\mathbf Z_{k_1}$
and
$
\mathbf X_2^\ast
=\PLPII{\mathbf Z_{k_1}}{\mathbf R_{k_2}}
$, where
$(\mathbf Z_{k_1})_{gr}=\mathbf G_1$.
Its universe,
$W=(Z_{k_1}\times\{\top\})\cup(G_1\times R_{k_1})$, is clearly densely ordered, and $(Z_{k_1}\times\{\top\})\cup(G_1\times Q_{k_1})$ is a countable and dense subset of it.
Take any subset of $V\subseteq W$ which has an upper bound $(b_1,b_2)\in W$.
Let $V_1=\{v_1 \ | \ (v_1,v_2)\in V\}$.
Then $V_1\subseteq Z_{k_1}$ is nonempty and bounded from above by $b_1$. Since $Z_{k_1}$ is Dedekind complete, there exists the supremum $m_1$ of $V_1$.
If $m_1\notin G_1$ then $(m_1,\top)\in W$ is the supremum of $V$.
If $m_1\in G_1$ then $m_1\in V_1$ holds, too, since $G_1$ is discretely embedded into $Z_{k_1}$.
Then $V_2:=\{v_2 \ | \ (m_1,v_2)\in V\}\subseteq R_{k_1}\cup\{\top\}$ is nonempty and bounded from above by $b_2$.
Since $R_{k_1}$ is Dedekind complete, there exists the supremum $m_2$ of $V_2$ in $R_{k_1}\cup\{\top\}$, and clearly, $(m_1,m_2)$ is in $W$ and it is the supremum of $V$.
Summing up, $\mathbf X_2^\ast$ is Dedekind complete, so we are done with the basic step of the induction.

Induction hypothesis: Assume that for $m\in M\setminus\{n\}$, $\mathbf X_m^\ast$ is order-isomorphic to $\mathbf R$, and let $r=\min\{ i\in M \ | \ i>m\}$ be the next element of $M$. 
Because of (\ref{NoTwoType2s}), either $r=m+1$ or $r=m+2$ hold.
If $r=m+1$ then either $\iota_{m+2}=III$ or $m+2=n$.
Since $m\in M\setminus\{n\}$, therefore $\iota_{m+1}=III$ holds, and hence the third row of (\ref{slkalajksdhsjkjhkds}) applies, yielding
$
\mathbf X_r^\ast=
\PLPI{(\mathbf X_m^\ast)}{\mathbf H_{m,1}}{\mathbf R_{k_r}}
$.
An application of Proposition~\ref{topbotJOLESZ} yields that $\mathbf X_r^\ast$ is order isomorphic to $\mathbb R$.
If $r=m+2$ then $\iota_{m+1}=III$, $\iota_{m+2}=IV$, and either $\iota_{m+3}=III$ or $m+3=n$.
Therefore, by the fourth row of (\ref{slkalajksdhsjkjhkds}), 
$\mathbf X_{m+1}^\ast=\PLPI{(\mathbf X_m^\ast)}{\mathbf H_{m,1}}{\mathbf Z_{k_r}}$
and 
by the last row of (\ref{slkalajksdhsjkjhkds}),
$\mathbf X_{m+2}^\ast=
\PLPII{\mathbf X_{m+1}^\ast}{\mathbf R_{k_{m_2}}}
$;
thus Proposition~\ref{topJOLESZ} ensures that $\mathbf X_r^\ast$ is order isomorphic to $\mathbb R$.
\qed
\end{proof}

\bibliographystyle{amsplain}

\end{document}